\newtheorem{thm}{Theorem}[section]
\newtheorem{pro}{Proposition}[section]
\newtheorem{lem}{Lemma}[section]
\title{{\normalsize\ \textbf{ZEROS OF CERTAIN WEAKLY HOLOMORPHIC MODULAR FORMS FOR THE FRICKE GROUP $\Gamma_0^+(3)$}}}
\author{Seiichi Hanamoto, Seiji Kuga}
\date{}
\begin{document}
\maketitle

\begin{abstract}
Let $M_k^!(\Gamma_0^+(3))$ be the space of weakly holomorphic modular forms of weight $k$ for the Fricke group of level $3$. We introduce a natural basis for $M_k^!(\Gamma_0^+(3))$ and prove that for almost all basis elements, all of their zeros in a fundamental domain lie on the circle centered at 0 with radius  $\frac{1}{\sqrt{3}}$.

\end{abstract}
\section{Introduction}
Rankin and Swinnerton-Dyer studied the location of the zeros of the Eisenstein series $E_k(z):=1-\frac{2k}{B_k}\sum_{n=1}^{\infty}\sigma_{k-1}(n)q^n$ of weight $k$ for the full modular group $SL_2(\mathbb{Z})$, where $q=e^{2\pi i z}$, $B_k$ is the $k$th Bernoulli number, and $\sigma_{k-1}(n)=\sum_{d|n}d^{k-1}$. They showed that the zeros of $E_k$ lie on the lower boundary arc of a fundamental domain for $SL_2(\mathbb{Z})$ [5]. Next, Duke and Jenkins showed that the zeros of certain natural basis for the space of weakly holomorphic modular forms for $SL_2(\mathbb{Z})$ lie on the arc of the fundamental domain [3]. Recently,  the locations of the zeros of weakly holomolphic modular forms for congruence subgroups or Fricke groups have been studied for various levels. In 2016, Choi and Im studied for the Fricke group of level $2$ and obtained a similar result in the case of $SL_2(\mathbb{Z})$ [1]. In this paper, we study the location of the zeros of natural basis for the space of weakly holomorphic modular forms for the Fricke group of level 3.

Let\ $k\in2\mathbb{Z}$, $\mathbb{H}:=\left\{z\in \mathbb{C}\ |\ {\rm Im}(z)>0\right\}$, $\Gamma_0(3):=\left\{\begin{pmatrix}a&b\\c&d\end{pmatrix}\in SL_2(\mathbb{Z})\ |\ c\equiv0\ (\bmod{3})\right\}$, $\Gamma_0^+(3):=\Gamma_0(3)\cup\begin{pmatrix}0&-\frac{1}{\sqrt{3}}\\ \sqrt{3}&0\end{pmatrix}\Gamma_0(3)$, and\ $q:=e^{2\pi iz}\ $for\ $z\in\mathbb{H}$. A holomorphic function $f$ on $\mathbb{H}$ is a weakly holomorphic modular form of weight $k$ with respect to $\Gamma_0^+(3)$ if $f$ satisfies
$$\begin{cases}f\left(\frac{az+b}{cz+d}\right)=(cz+d)^kf(z)$ for all $ \begin{pmatrix}a&b\\ c&d\end{pmatrix}\in \Gamma_0^+(3)$ and $z\in \mathbb{H}.\\ 
f$ has the $q$-expansion of the form $f(z)=\sum_{n\ge n_f}^\infty a_nq^n.\end{cases}$$

We define $f$ is holomorphic if $n_f\ge0$, a cusp form if $n_f\ge1$.
We denote the space of weakly holomorphic modular forms of weight $k$ on $\Gamma_0^+(3)$ by $M_k^!(\Gamma_0^+(3))$, the space of holomorphic modular forms by $M_k(\Gamma_0^+(3))$, and the space of cusp forms by $S_k(\Gamma_0^+(3))$. \\

Let
$$\mathbb{F}^+(3):=\left\{z\in \mathbb{H}\ |\ |z|\ge \frac{1}{\sqrt{3}},-\frac{1}{2}\le {\rm Re}(z)\le 0\right\}\cup \left\{z\in \mathbb{H}\ |\ |z|>\frac{1}{\sqrt{3}},0<{\rm Re}(z)<\frac{1}{2}\right\},$$
$$S:=\left\{\frac{1}{\sqrt{3}}e^{i\theta}\ |\ \frac{\pi}{2}\le \theta \le \frac{5\pi}{6}\right\}.$$
Then, $\mathbb{F}^+(3)$ is a fundamental domain of $\Gamma_0^+(3)$ [4, p. 694] and $S$ is the lower boundary arc of $\mathbb{F}^+(3)$.

Put $k=12\ell_k+r_k$ where $\ell_k\in \mathbb{Z}$, $r_k\in \{0,4,6,8,10,14\}$, and
\begin{center}
$\varepsilon_k:={\rm dim}\ S_{r_k}(\Gamma_0^+(3))=\begin{cases}0\ \ \ \textrm{if}\ r_k=0,4,6\\ 1\ \ \ \textrm{if}\ r_k=8,10,14\end{cases}$.\\
\end{center}
For each integer $m$ such that $m\ge-2\ell_k-\varepsilon_k$, there exists a unique weakly holomorphic modular form $f_{k,m}\in M_k^!(3)$ with $q$-expansion of the form
$$f_{k,m}(z)=q^{-m}+O(q^{2\ell_k+\varepsilon_k+1}).$$
Then, $\{f_{k,m}\}_{m\ge-2\ell_k-\varepsilon_k}$ form a natural basis of $M_k^!(3)$.\\

Let $\eta(z):=q^{\frac{1}{24}}\prod_{n=1}^\infty(1-q^n)$ be the Dedekind eta function. To construct the canonical basis for $M_k^!(3)$, we now define some weakly holomorphic modular forms as follows.
$$\begin{cases}\varDelta_3^+(z):=(\eta(z)\eta(3z))^{12}\in S_{12}(\Gamma_0^+(3)),\\E_k^+(z):=\frac{1}{1+3^\frac{k}{2}}(E_k(z)+3^\frac{k}{2}(3z))\in M_k(\Gamma_0^+(3)),\\j_3^+(z):=(\frac{\eta(z)}{\eta(3z)})^{12}+12+3^6(\frac{\eta(3z)}{\eta(z)})^{12}\in M_0^!(\Gamma_0^+(3)).\end{cases}$$
Furthermore, we define the holomorphic forms $\varDelta_{3,r_k}\in M_{r_k}(\Gamma_0^+(3))$ as follows.
$$\varDelta_{3,r_k}(z):=\begin{cases}1\ \ \ (r_k=0),\\E_4^+(z)\ \ \ (r_k=4),\\E_6^+(z)\ \ \ (r_k=6),\\\frac{41}{1728}(E_4^+(z)^2-E_8^+(z))\ \ \ (r_k=8),\\\frac{61}{432}(E_4^+(z)E_6^+(z)-E_{10}^+(z))\ \ \ (r_k=10),\\\frac{-22427}{272160}(E_6^+(z)E_8^+(z)-E_{14}^+(z))\ \ \ (r_k=14).\end{cases}$$
Referring to [2, Remark 2.2], $\varDelta_{3,r_k}$ can also be defined as the unique holomorphic form of weight $r_k$ with $q$-expansion of the form
$$\varDelta_{3,r_k}(z)=q^{\varepsilon_k}+O(q^{\varepsilon_k+1}).$$
Hence, $\varDelta_{3,r_k}\varDelta_{3,14-r_k}=\varDelta_{3,14}$ for any $r_k\in\{0,4,6,8,10,14\}$.\\
Then, we can construct the canonical basis $\{f_{k,m}\}_{m\ge-2\ell_k-\varepsilon_k}$ for $M_k^!(\Gamma_0^+(3))$ explicitly [2, Theorem 2.4].
\begin{equation}
f_{k,m}=(\varDelta_3^+)^{\ell_k}\varDelta_{3,r_k}F_f(j_3^+),
\end{equation}
where $F_f$ is the monic polynomial with integer coefficients of degree $2\ell_k+\varepsilon_k+m$ determined by $f$.\\
Now, our main theorem is following.
\begin{thm}
If $m\ge 18|\ell_k|+23$, then all of the zeros in $\mathbb{F}^+(3)$ of $f_{k,m}$ lie on $S$. 
\end{thm}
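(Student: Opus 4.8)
The plan is to adapt the Rankin--Swinnerton-Dyer method to $\Gamma_0^+(3)$, using the Fricke involution $W_3\colon z\mapsto-\tfrac1{3z}$ to pin down the behaviour of $f_{k,m}$ on the arc $S$. First I would record a reality statement. Parametrising $S$ by $z=\tfrac1{\sqrt3}e^{i\theta}$, $\theta\in[\tfrac\pi2,\tfrac{5\pi}6]$, one has $W_3 z=\tfrac1{\sqrt3}e^{i(\pi-\theta)}=-\bar z$ on $S$; combining the automorphy relation $f_{k,m}(W_3z)=(\sqrt3\,z)^k f_{k,m}(z)$ with $\overline{f_{k,m}(z)}=f_{k,m}(-\bar z)$ (valid since $\varDelta_3^+$, $\varDelta_{3,r_k}$, $j_3^+$ and hence $f_{k,m}$ have real $q$-coefficients) gives $\overline{f_{k,m}(z)}=e^{ik\theta}f_{k,m}(z)$, because $(\sqrt3\,z)^k=e^{ik\theta}$. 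Therefore
$$g(\theta):=e^{ik\theta/2}f_{k,m}\!\left(\tfrac1{\sqrt3}e^{i\theta}\right)$$
is real-valued on $S$, and the zeros of $f_{k,m}$ on $S$ are exactly the zeros of $g$.

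Next I would isolate the dominant term of $g$. Inserting the $q$-expansion and taking real parts term by term,
$$g(\theta)=\sum_{n\ge -m}a_n\,e^{-\frac{2\pi n}{\sqrt3}\sin\theta}\cos\!\left(\tfrac{k\theta}2+\tfrac{2\pi n}{\sqrt3}\cos\theta\right),$$
where $a_{-m}=1$ and $a_n=0$ for $-m<n\le 2\ell_k+\varepsilon_k$. The principal term ($n=-m$) equals $e^{\frac{2\pi m}{\sqrt3}\sin\theta}\cos\psi(\theta)$ with $\psi(\theta)=\tfrac{k\theta}2-\tfrac{2\pi m}{\sqrt3}\cos\theta$, and its exponential amplitude dominates every other term since $\sin\theta\ge\tfrac12>0$ on $S$. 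A direct computation gives $\psi(\tfrac{5\pi}6)-\psi(\tfrac\pi2)=\pi\bigl(m+\tfrac k6\bigr)$ and $\psi'(\theta)=\tfrac k2+\tfrac{2\pi m}{\sqrt3}\sin\theta>0$, so $\psi$ is increasing and $\cos\psi$ has about $m+\tfrac k6$ sign changes on $S$, located at the points where $\psi\in\pi\mathbb Z$.

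I would then match this against a valence count. The hyperbolic area of $\mathbb F^+(3)$ is $\tfrac{2\pi}3$ and $\Gamma_0^+(3)$ has signature $(0;2,6;1)$, with the order-$2$ point $\tfrac{i}{\sqrt3}$ and the order-$6$ point $\rho':=\tfrac1{\sqrt3}e^{5\pi i/6}$ being exactly the two endpoints of $S$; the valence formula reads $\operatorname{ord}_\infty f+\tfrac12\operatorname{ord}_{i/\sqrt3}f+\tfrac16\operatorname{ord}_{\rho'}f+\sum^{*}\operatorname{ord}_P f=\tfrac k6$. Since $\operatorname{ord}_\infty f_{k,m}=-m$, the total number of zeros of $f_{k,m}$ in $\mathbb F^+(3)$ (suitably weighted) is $m+\tfrac k6$, matching the sign-change count above. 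Using the factorisation $f_{k,m}=(\varDelta_3^+)^{\ell_k}\varDelta_{3,r_k}F_f(j_3^+)$, in which $\varDelta_3^+$ is non-vanishing on $\mathbb H$ and $\varDelta_{3,r_k}$ vanishes only at the elliptic endpoints of $S$, the problem reduces to showing that the monic polynomial $F_f$ of degree $2\ell_k+\varepsilon_k+m$ has all of its roots real and inside the interval $j_3^+(S)$ (here $j_3^+$ is real and monotonic on $S$). Bookkeeping the forced zeros of $\varDelta_{3,r_k}$ at the two elliptic points then shows the sign changes of $g$ account for every zero, so none can lie off $S$.

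The hard part will be the error estimate that turns ``sign changes of $\cos\psi$'' into ``sign changes of $g$''. Evaluating $g$ at the sample points $\theta_j$ where $\cos\psi(\theta_j)=\pm1$, one must show
$$\sum_{n\ge 2\ell_k+\varepsilon_k+1}|a_n|\,e^{-\frac{2\pi(n+m)}{\sqrt3}\sin\theta_j}<1,$$
so that $g(\theta_j)$ inherits the sign of $\cos\psi(\theta_j)$ and the signs genuinely alternate. This needs effective bounds on the Fourier coefficients $a_n$ of $f_{k,m}$ (via its explicit construction from $\varDelta_3^+$ and $j_3^+$, whose sizes on $S$ are controlled by $|q|\le e^{-\pi/\sqrt3}$), together with the elementary inequality $n+m\ge 2\sqrt{mn}$ to beat the subexponential growth of the $a_n$. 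The threshold at which this domination becomes valid for all $\theta\in S$ simultaneously, uniformly in the sign of $\ell_k$, is precisely the hypothesis $m\ge 18|\ell_k|+23$; I expect extracting this explicit constant to be the most delicate step, with the negative-$\ell_k$ case (where $(\varDelta_3^+)^{\ell_k}$ enlarges the principal part) requiring the most care.
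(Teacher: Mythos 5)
Your structural outline (reality of $e^{ik\theta/2}f_{k,m}$ on $S$, a phase function $\psi$, counting sign changes, then closing with the valence formula and the forced vanishing at the elliptic points) is the same as the paper's. But your core analytic step --- that the single principal term $e^{\frac{2\pi m}{\sqrt3}\sin\theta}\cos\psi(\theta)$ dominates, with the tail bound $\sum_{n\ge 2\ell_k+\varepsilon_k+1}|a_n|e^{-\frac{2\pi(n+m)}{\sqrt3}\sin\theta_j}<1$ --- is not merely hard, it is false, and the symptom is already visible in the shape of your main term. On $S$ the correct approximation is $2\cos\psi(\theta)$, not $\cos\psi(\theta)$: the tail of the $q$-expansion contains the Fricke reflection of the principal term (on $S$ one has $-1/(3z)=-\bar z$, so this reflected term lives in the positive-index part of the expansion), and it has \emph{exactly} the same exponential amplitude $e^{\frac{2\pi m}{\sqrt3}\sin\theta}$ as the $n=-m$ term. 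Consequently the signed tail is approximately $\cos\psi(\theta_j)=\pm1$ at your sample points, so the absolute-value sum that you need to be $<1$ is itself at least about $1$; no coefficient estimate can rescue the claim. Quantitatively things are even worse: the coefficients of $f_{k,m}$ grow like $e^{\frac{4\pi}{\sqrt3}\sqrt{mn}}$ up to lower-order factors (circle method), so after your inequality $n+m\ge2\sqrt{mn}$ each term is only bounded by $e^{\frac{4\pi}{\sqrt3}\sqrt{mn}(1-\sin\theta)}$ --- break-even at $\theta=\frac{\pi}{2}$ and exponentially large for every $\theta>\frac{\pi}{2}$. The premise of ``subexponential growth of the $a_n$'' is what fails.

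The paper's way around this (following Duke--Jenkins) is to abandon the $q$-expansion of $f_{k,m}$ and use the generating-function integral $f_{k,m}(z)=\frac{1}{2\pi i}\oint_C f_k(z)f_{2-k}(\tau)\,(j_3^+(\tau)-j_3^+(z))^{-1}{q'}^{-m-1}dq'$. Pushing the contour down to a fixed height and applying the residue theorem extracts one exponential term for each $\Gamma_0^+(3)$-translate of $z$ inside the strip: the translates $\tau=z$ and $\tau=-1/(3z)$ produce exactly $2\cos\alpha(\theta)$, while for $\theta$ near the corner $\rho_3$ four further translates (e.g.\ $z/(3z+1)$, $-1/(3z+3)$) enter the strip and produce correction terms $B_{k,m},C_{k,m}$ which are separately bounded ($<0.62504$ and $<0.25843$); the leftover contour integral is then estimated by explicit bounds on $\varDelta_3^+$, $\varDelta_{3,r_k}$ and $|j_3^+(\tau)-j_3^+(z)|$. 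Note also that even this argument only holds for $\theta\le\frac{5\pi}{6}-\frac{12}{25m}$ (there the total error $0.99728$ barely stays under $1$), and the final zero near $\rho_3$ must be caught from a half-oscillation of $2\cos\alpha$ between $\pm1$; your claim of domination ``for all $\theta\in S$ simultaneously'' conceals exactly this corner difficulty, which is where the hypothesis $m\ge18|\ell_k|+23$ and the extra residue terms are genuinely needed.
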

\section{Out line of the proof of Theorem 1.1}
In this section, we introduce four propositions to prove the main theorem and its proof. 
\begin{pro}
For $f(z)=\sum_{n\ge n_0}^\infty a(n)q^n \in M_k^!(\varGamma_0^+(3))$ with $a(n)$ is real for all n, then $e^{\frac{ik\theta}{2}}f(\frac{1}{\sqrt{3}}e^{i\theta})$ is real for all $\theta \in [\frac{\pi}{2},\frac{5\pi}{6}]$. In particular, $e^{\frac{ik\theta}{2}}f_{k,m}(\frac{1}{\sqrt{3}}e^{i\theta})$ is real for all $\theta \in [\frac{\pi}{2},\frac{5\pi}{6}]$.
\end{pro}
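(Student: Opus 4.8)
The plan is to exploit the Fricke involution $W := \begin{pmatrix} 0 & -\frac{1}{\sqrt{3}} \\ \sqrt{3} & 0 \end{pmatrix}$, which together with $\Gamma_0(3)$ generates $\Gamma_0^+(3)$, and to couple its functional equation with the reality of the Fourier coefficients. First I would record the action of $W$ on $\mathbb{H}$: a direct computation gives $Wz = -\frac{1}{3z}$, with associated automorphy factor $(\sqrt{3}z)^k = 3^{k/2}z^k$, so the modularity of $f$ under $W$ reads
$$f\!\left(-\tfrac{1}{3z}\right) = 3^{k/2} z^k f(z).$$
The geometric heart of the argument is the observation that on the circle $|z| = \frac{1}{\sqrt{3}}$, which contains the arc $S$, the relation $z\bar z = \frac{1}{3}$ gives $-\frac{1}{3z} = -\bar z$; hence $W$ acts on this circle exactly as the reflection $z \mapsto -\bar z$ in the imaginary axis.

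Next I would bring in the reality of the coefficients. Writing $f(z) = \sum_{n \ge n_0} a(n) e^{2\pi i n z}$ with all $a(n) \in \mathbb{R}$, and noting that this series defines a holomorphic function on all of $\mathbb{H}$ (the finitely many terms with $n < 0$ are entire and the remaining tail converges there), I would check the elementary identity $\overline{f(z)} = f(-\bar z)$ by conjugating the expansion term by term; this is legitimate because ${\rm Im}(-\bar z) = {\rm Im}(z) > 0$ keeps us inside $\mathbb{H}$, where $f$ is defined. Evaluating the functional equation at a point of the circle and feeding in $Wz = -\bar z$ then yields, for every $z$ with $|z| = \frac{1}{\sqrt{3}}$,
$$\overline{f(z)} = f(-\bar z) = f\!\left(-\tfrac{1}{3z}\right) = 3^{k/2} z^k f(z).$$

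Finally I would specialize to $z = \frac{1}{\sqrt{3}} e^{i\theta}$ with $\theta \in [\frac{\pi}{2}, \frac{5\pi}{6}]$, so that $z^k = 3^{-k/2} e^{ik\theta}$ and therefore $3^{k/2} z^k = e^{ik\theta}$. The displayed identity becomes $\overline{f(z)} = e^{ik\theta} f(z)$, and multiplying through by $e^{-ik\theta/2}$ gives $\overline{e^{ik\theta/2} f(z)} = e^{-ik\theta/2}\,\overline{f(z)} = e^{ik\theta/2} f(z)$; that is, $e^{ik\theta/2} f\!\left(\frac{1}{\sqrt{3}} e^{i\theta}\right)$ coincides with its own complex conjugate and so is real. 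The assertion for $f_{k,m}$ follows at once, since by $(1)$ its $q$-expansion has integer, hence real, coefficients. I do not anticipate a serious obstacle: the argument is short, and the only points demanding care are confirming that $W$ carries the circle into $\mathbb{H}$ so the functional equation applies pointwise along $S$, justifying the term-by-term conjugation for a \emph{weakly} holomorphic form (handled by the convergence remark above), and tracking the automorphy factor correctly—here the hypothesis $k \in 2\mathbb{Z}$ is what removes any ambiguity in the power $z^k$.
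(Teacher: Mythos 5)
Your proposal is correct and follows essentially the same route as the paper: the functional equation under the Fricke involution, the identity $\overline{f(z)}=f(-\overline{z})$ from the real Fourier coefficients, and the observation that $-\frac{1}{3z}=-\overline{z}$ on the circle $|z|=\frac{1}{\sqrt{3}}$, combined to give $\overline{e^{ik\theta/2}f(z)}=e^{ik\theta/2}f(z)$. The extra care you take with convergence of the weakly holomorphic expansion and the evenness of $k$ is sound but does not change the argument.
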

\begin{proof}
For all $z\in \mathbb{H}$, we note that
$$f\left(\frac{-1}{3z}\right)=(\sqrt{3}z)^kf(z),$$
and
$$\overline{f(z)}=\overline{\sum_{n\ge n_f} a(n)e^{2\pi i nz}}=\sum_{n\ge n_f}a(n)\overline{e^{2\pi i nz}}=\sum_{n\ge n_f}a(n)e^{2\pi i n(-\overline{z})}=f(-\overline{z}).$$
Put $z=\frac{1}{\sqrt{3}}e^{i\theta}\ (\frac{\pi}{2}\le\theta\le\frac{5\pi}{6})$, then $\frac{-1}{3z}=-\frac{1}{\sqrt{3}}e^{-i\theta}=-\overline{z}$. Hence 
$$\overline{f\left(\frac{1}{\sqrt{3}}e^{i\theta}\right)}=\overline{f(z)}=f(-\overline{z})=f\left(\frac{-1}{3z}\right)=(\sqrt{3}z)^kf(z)=e^{ik\theta}f\left(\frac{1}{\sqrt{3}}e^{i\theta}\right).$$
Thus, we obtain
$$\overline{e^{\frac{ik\theta}{2}}f\left(\frac{1}{\sqrt{3}}e^{i\theta}\right)}=e^{\frac{ik\theta}{2}}f\left(\frac{1}{\sqrt{3}}e^{i\theta}\right).$$
\end{proof}
\begin{pro}[valence formula]
Let $f\in M_k^!(\varGamma_0^+(3))$, which is not identically zero. We have
$$v_\infty (f)+\frac{1}{2}v_{\frac{i}{\sqrt{3}}}(f)+\frac{1}{6}v_{\rho_3}(f)+\sum_{\substack{\rho \ne \frac{i}{\sqrt{3}},\rho_{3}\\ \rho \in \mathbb{F}^+(3)}}v_\rho(f)=\frac{k}{6},$$
where $v_\rho(f)$ is the order of $f$ at $\rho$, and  $\rho_3=\frac{1}{\sqrt{3}}e^{\frac{5\pi i}{6}}$. {\rm (see [6])}
\end{pro}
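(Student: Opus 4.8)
\section*{Proof proposal}

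The plan is to prove the valence formula by the classical contour-integration argument, applying the argument principle to $f'/f$ along the boundary of a truncated copy of $\mathbb{F}^+(3)$. Concretely, I would fix a large height $T$ and integrate $\frac{1}{2\pi i}\frac{f'(z)}{f(z)}\,dz$ counterclockwise along the contour $C$ that runs along the bottom arc $|z|=1/\sqrt3$ from $\rho_3$ rightward across the top point $i/\sqrt3$ to the corner $\rho_3+1=\tfrac12+\tfrac{i}{2\sqrt3}$, up the right line $\mathrm{Re}(z)=\tfrac12$ to $\tfrac12+iT$, leftward across the top segment to $-\tfrac12+iT$, and down the left line $\mathrm{Re}(z)=-\tfrac12$ back to $\rho_3$. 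Since $f$ has only finitely many zeros and poles in $\mathbb{F}^+(3)$, I excise small circular detours around the elliptic points $i/\sqrt3$ and $\rho_3$ (together with its translate $\rho_3+1$), and choose $T$ large and the radii small so that $C$ meets no zero or pole; the argument principle then equates $\frac{1}{2\pi i}\oint_C\frac{f'}{f}\,dz$ with $\sum_{\rho\neq i/\sqrt3,\rho_3}v_\rho(f)$, the sum of orders at the enclosed interior points. It remains to evaluate the four kinds of boundary pieces and to check that they sum to $\frac{k}{6}-v_\infty(f)-\tfrac12 v_{i/\sqrt3}(f)-\tfrac16 v_{\rho_3}(f)$.

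Two contributions are routine. On the top segment I substitute $q=e^{2\pi iz}$; as $z$ runs from $\tfrac12+iT$ to $-\tfrac12+iT$ the change of $\log f$ is controlled by the leading term $a_{n_f}q^{n_f}$ with $n_f=v_\infty(f)$, so $\frac{1}{2\pi i}\int_{\mathrm{top}}\frac{f'}{f}\,dz\to -v_\infty(f)$ as $T\to\infty$. The two vertical sides are identified by $z\mapsto z+1\in\Gamma_0(3)$, under which $f$ and hence $f'/f$ is invariant; since they are traversed in opposite senses, their contributions cancel.

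The weight term arises from the bottom arc through the Fricke relation $f\!\left(-1/(3z)\right)=(\sqrt3 z)^k f(z)$ from Proposition 2.1. The involution $W_3\colon z\mapsto-1/(3z)$ sends $\tfrac{1}{\sqrt3}e^{i\theta}$ to $\tfrac{1}{\sqrt3}e^{i(\pi-\theta)}$, hence maps the right half-arc $\theta\in[\pi/6,\pi/2]$ onto $S$ and fixes $i/\sqrt3$. Writing $w=W_3(z)$ on the right half and using $\log f(W_3 w)=k\log(\sqrt3 w)+\log f(w)$, the $f'/f$ parts of the two half-arcs appear with opposite orientation and cancel, leaving only $\frac{1}{2\pi i}\int_S\frac{k}{w}\,dw$. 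Since $dw/w=i\,d\theta$ on $|w|=1/\sqrt3$, this equals $\frac{k}{2\pi i}\,i\bigl(\tfrac{5\pi}{6}-\tfrac{\pi}{2}\bigr)=\frac{k}{6}$, the desired right-hand side.

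Finally I evaluate the detours. Near a zero or pole of order $v$ one has $f'/f\sim v/(z-P)$, so a detour along a circular arc of interior angle $\alpha$ that excludes $P$ from $C$ contributes $-v\alpha/(2\pi)$. The circle $|z|=1/\sqrt3$ is smooth at $i/\sqrt3$, so the detour there is a semicircle, $\alpha=\pi$, giving $-\tfrac12 v_{i/\sqrt3}(f)$, in accordance with $i/\sqrt3$ being the order-$2$ fixed point of $W_3$. For $\rho_3$ I use that $\rho_3$ and $\rho_3+1$ are $\Gamma_0^+(3)$-equivalent, whence $v_{\rho_3}(f)=v_{\rho_3+1}(f)$, and that $\rho_3$ is an elliptic point of order $6$: it is fixed by $W_3T=\left(\begin{smallmatrix}0&-1/\sqrt3\\ \sqrt3&\sqrt3\end{smallmatrix}\right)$, whose eigenvalues $e^{\pm i\pi/6}$ generate a cyclic stabilizer of order $6$ in $PSL_2(\mathbb{R})$. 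The two corner detours then subtend a total interior angle $2\pi/6=\pi/3$ about this orbifold point, contributing $-\tfrac16 v_{\rho_3}(f)$. Summing the four pieces and rearranging gives the formula. I expect the corner bookkeeping to be the main obstacle: one must verify that the interior angles of $\mathbb{F}^+(3)$ at $\rho_3$ and at $\rho_3+1$ sum to exactly $\pi/3$ and that all orientations and signs survive the $W_3$-substitution, so that the detours produce precisely the weights $\tfrac12$ and $\tfrac16$; the cusp and side terms are by comparison standard.
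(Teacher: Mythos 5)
The paper itself does not prove Proposition 2.2 at all --- its ``proof'' is the citation ``(see [6])'' to Serre --- so your contour argument is not competing with a different argument in the paper; it is precisely the adaptation of Serre's $SL_2(\mathbb{Z})$ proof that the citation implicitly invokes. Your level-3 bookkeeping is correct where it is most likely to go wrong: the Fricke map $z\mapsto -1/(3z)$ does send $\frac{1}{\sqrt3}e^{i\theta}$ to $\frac{1}{\sqrt3}e^{i(\pi-\theta)}$, so after the $W_3$-substitution the $f'/f$ terms on the two half-arcs cancel and the leftover term $\frac{k}{2\pi i}\int\frac{dw}{w}$ over a half-arc of angular length $\frac{\pi}{3}$ is exactly $\frac{k}{6}$; the interior angles of $\mathbb{F}^+(3)$ at $\rho_3$ and $\rho_3+1$ are each $\frac{\pi}{6}$ (the tangent to $|z|=\frac{1}{\sqrt3}$ at $\rho_3$ has direction $e^{i\pi/3}$, against the vertical edge $e^{i\pi/2}$), so with $v_{\rho_3}(f)=v_{\rho_3+1}(f)$ the two corner detours give $-\frac{2}{2\pi}\cdot\frac{\pi}{6}\,v_{\rho_3}(f)=-\frac16 v_{\rho_3}(f)$; and $W_3T$ has trace $\sqrt3$, hence eigenvalues $e^{\pm i\pi/6}$ and order $6$ in $PSL_2(\mathbb{R})$, consistent with that coefficient.

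The one genuine gap is the sentence ``choose $T$ large and the radii small so that $C$ meets no zero or pole.'' That takes care of zeros near the cusp and near the elliptic points, but not of zeros lying \emph{on the edges} of the fundamental domain: the contour is forced to run along $\mathrm{Re}(z)=\pm\frac12$ and along $|z|=\frac{1}{\sqrt3}$, and no choice of $T$ or of detour radii moves it off a zero sitting on one of those curves. This is not a negligible case here --- the main theorem of this very paper places all zeros of $f_{k,m}$ on the arc $S$, so for the forms of interest the contour passes through every zero. The standard repair (needed already in Serre's proof) is to deform the contour by a small detour around each boundary zero so that each $\Gamma_0^+(3)$-orbit is counted exactly once: include the zero on the left vertical edge and on $S$ (the edges that $\mathbb{F}^+(3)$ contains by definition), and take the detours on the right vertical edge and on the right half-arc to be the exact images under $z\mapsto z+1$ and under $W_3$ of the left-hand detours, so that they exclude the equivalent points. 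Because the identifying maps carry detour to detour, the side cancellation and the $W_3$-cancellation on the arc go through verbatim, and the excluded/included pairs contribute each orbit's order once to $\sum_\rho v_\rho(f)$. With this modification, plus the routine remark that the endpoint mismatches at the shrinking elliptic detours contribute $o(1)$, your proof is complete and is the intended one.
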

\begin{pro}
For $f\in M_k^!(\varGamma_0^+(3))$, we have
\begin{align}
v_{\frac{i}{\sqrt{3}}}(f)&\ge s_k \ \ (s_k=0,1\ \ such\ that\ 2s_k\equiv k\mod 4),\notag\\
v_{\rho_3}(f)&\ge t_k \ \ (t_k=0,1,2,3,4,5\ \ such\ that\ -2t_k\equiv k\mod 12).\notag
\end{align}
\end{pro}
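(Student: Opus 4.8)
The plan is to use that $\tfrac{i}{\sqrt3}$ and $\rho_3$ are elliptic fixed points of $\Gamma_0^+(3)$ and to extract the forced order of vanishing from the transformation law of $f$ under a generator of the stabilizer of each point. The point $\tfrac{i}{\sqrt3}$ is fixed by the Fricke element $W=\left(\begin{smallmatrix}0&-1/\sqrt3\\\sqrt3&0\end{smallmatrix}\right)$, and since $W^2=-I$ this stabilizer has order $2$ in $PSL_2(\mathbb{R})$. For $\rho_3$ I would first observe that, although it is only an order-$3$ point for $\Gamma_0(3)$ (fixed e.g. by $\left(\begin{smallmatrix}2&1\\-3&-1\end{smallmatrix}\right)$), it is fixed in the full group by $\delta:=\left(\begin{smallmatrix}1&-1\\0&1\end{smallmatrix}\right)W=\left(\begin{smallmatrix}-\sqrt3&-1/\sqrt3\\\sqrt3&0\end{smallmatrix}\right)\in\Gamma_0^+(3)$; this matrix has determinant $1$ and trace $-\sqrt3$, hence eigenvalues $e^{\pm5\pi i/6}$, so it has order $6$ in $PSL_2(\mathbb{R})$ (with $\delta^2$ the order-$3$ element above). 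This order-$6$ stabilizer is exactly what the coefficient $\tfrac16$ of $v_{\rho_3}$ in Proposition 2.2 records, and it is the crux of the whole argument.

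Next I would run the standard leading-coefficient comparison. Writing $f(z)=\sum_{j\ge v}c_j(z-\rho)^j$ with $c_v\ne0$ and $v=v_\rho(f)$ near a fixed point $\rho$ of $\gamma=\left(\begin{smallmatrix}a&b\\c&d\end{smallmatrix}\right)\in\Gamma_0^+(3)$, I substitute into $f(\gamma z)=(cz+d)^k f(z)$. Since $\gamma\rho=\rho$ one has $\gamma z-\rho=\gamma'(\rho)(z-\rho)+O((z-\rho)^2)$ with $\gamma'(\rho)=(c\rho+d)^{-2}$, so the lowest-order term on the left is $c_v(c\rho+d)^{-2v}(z-\rho)^v$ while on the right it is $(c\rho+d)^{k}c_v(z-\rho)^v$. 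Cancelling $c_v(z-\rho)^v$ gives the root-of-unity identity
\[(c\rho+d)^{2v+k}=1.\]

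Finally I would evaluate this in the two cases. For $\gamma=W$ and $\rho=\tfrac{i}{\sqrt3}$ the automorphy factor is $c\rho+d=\sqrt3\cdot\tfrac{i}{\sqrt3}=i$, a primitive $4$th root of unity, so $i^{2v+k}=1$ forces $2v+k\equiv0\pmod4$; as $k$ is even this is $2v\equiv k\pmod4$, whose minimal nonnegative solution is $s_k$, giving $v_{i/\sqrt3}(f)\ge s_k$. For $\gamma=\delta$ and $\rho=\rho_3$ the automorphy factor is $c\rho+d=\sqrt3\,\rho_3=e^{5\pi i/6}$, a primitive $12$th root of unity, so $e^{5\pi i(2v+k)/6}=1$ forces $5(2v+k)\equiv0\pmod{12}$, hence $2v+k\equiv0\pmod{12}$, i.e. $-2v\equiv k\pmod{12}$; its minimal nonnegative solution is $t_k$, giving $v_{\rho_3}(f)\ge t_k$. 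I expect the main obstacle to be the identification of the order-$6$ stabilizer $\delta$ of $\rho_3$: using only the order-$3$ element of $\Gamma_0(3)$ would yield a congruence modulo $6$ and miss half of the statement, so the key point is to recognize that passing to the Fricke group doubles the order of this elliptic point. The remaining verifications—that $\delta\in\Gamma_0^+(3)$ and fixes $\rho_3$, the values of the automorphy factors, and the orders of the resulting roots of unity—are routine.
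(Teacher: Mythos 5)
Your proof is correct, but it follows a genuinely different route from the paper's. The paper disposes of this proposition in two lines: it quotes [4, Proposition B.1], which gives exactly these lower bounds for \emph{holomorphic} forms of weight $k\ge 4$, and then reduces the general case to it via the factorization $(1)$: any $f\in M_k^!(\Gamma_0^+(3))$ can be written as $\varDelta_{3,r_k}\cdot f_0$ with $f_0\in M_{12\ell_k}^!(\Gamma_0^+(3))$, and since $s_k,t_k$ depend only on $k\bmod 12$ (so $s_{r_k}=s_k$, $t_{r_k}=t_k$) while $v_\rho(f_0)\ge 0$ at interior points, the bounds for $\varDelta_{3,r_k}$ carry over to $f$. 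You instead prove the local statement from scratch: you exhibit explicit elliptic elements of $\Gamma_0^+(3)$ fixing the two points --- $W=\left(\begin{smallmatrix}0&-1/\sqrt3\\\sqrt3&0\end{smallmatrix}\right)$ of order $2$ at $\tfrac{i}{\sqrt3}$, and $\delta=\left(\begin{smallmatrix}1&-1\\0&1\end{smallmatrix}\right)W=\left(\begin{smallmatrix}-\sqrt3&-1/\sqrt3\\\sqrt3&0\end{smallmatrix}\right)$ of order $6$ at $\rho_3$ --- and run the leading-coefficient comparison to get $(c\rho+d)^{2v+k}=1$, hence the congruences $2v\equiv k\ (\bmod\ 4)$ and $-2v\equiv k\ (\bmod\ 12)$, hence the bounds since $v\ge 0$. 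Your computations all check out: $\delta$ indeed lies in $W\Gamma_0(3)\subset\Gamma_0^+(3)$ (it equals $W\left(\begin{smallmatrix}1&0\\3&1\end{smallmatrix}\right)$), fixes $\rho_3$, has eigenvalues $e^{\pm 5\pi i/6}$, $\delta^2$ is precisely the order-$3$ matrix $\left(\begin{smallmatrix}2&1\\-3&-1\end{smallmatrix}\right)\in\Gamma_0(3)$, and the automorphy factors $i$ and $e^{5\pi i/6}$ are primitive $4$th and $12$th roots of unity, so the congruences come out as you state. What your route buys is self-containedness and insight: it replaces the appeal to [4] (and to the decomposition $(1)$, itself quoted from [2]) by the standard argument at elliptic fixed points, and it makes transparent \emph{why} the moduli are $4$ and $12$ --- your key observation that $\rho_3$ has order $6$ in $\Gamma_0^+(3)$ rather than order $3$ as in $\Gamma_0(3)$ is exactly what the coefficient $\tfrac16$ in the valence formula (Proposition 2.2) records. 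What the paper's route buys is brevity: no matrix computations at all, at the cost of resting on an external result whose own proof is essentially the argument you gave.
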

\begin{proof}
The claim is true for $k\ge4$ and $f\in M_k(\Gamma_0^+(3))$ [4, Proposition B.1]. For general $k\in2\mathbb{Z}$ and $f\in M_k^!(\Gamma_0^+(3))$, $(1)$ implies that $f$ can be written as the product of $\varDelta_{3,r_k}\in M_{r_k}(\Gamma_0^+(3))$ and some $f_0\in M_{12\ell_k}^!(\Gamma_0^+(3))$. Hence, the claim is also true for $f$.
\end{proof}
\begin{pro}
\ \\
{\rm (a)}\ \ For all $\theta \in[\frac{\pi}{2},\frac{23}{10}]$, if $m\ge 9|\ell_k|-2\ell_k+18$,
$$\left|e^{-2\pi m\frac{1}{\sqrt{3}}\cos{\theta}}e^{\frac{ik\theta}{2}}f_{k,m}\left(\frac{1}{\sqrt{3}}e^{i\theta}\right)-2\cos\left({\frac{k\theta}{2}-2\pi m\frac{1}{\sqrt{3}}\cos{\theta}}\right)\right|<1.9674.$$
{\rm (b)}\ \ For all $\theta \in[\frac{23}{10},\frac{5\pi}{6}-\frac{12}{25m}]$, if $m\ge 18|\ell_k|+23$,
$$\left|e^{-2\pi m\frac{1}{\sqrt{3}}\cos{\theta}}e^{\frac{ik\theta}{2}}f_{k,m}\left(\frac{1}{\sqrt{3}}e^{i\theta}\right)-2\cos\left({\frac{k\theta}{2}-2\pi m\frac{1}{\sqrt{3}}\cos{\theta}}\right)\right|<0.99728.$$
\end{pro}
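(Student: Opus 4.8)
The plan is to isolate the contribution of the single principal term $q^{-m}$ and to prove that all remaining terms are uniformly negligible on the arc. Putting $z=\frac{1}{\sqrt3}e^{i\theta}$ and $q=e^{2\pi iz}$, one computes $e^{2\pi inz}=e^{-\frac{2\pi n}{\sqrt3}\sin\theta}\,e^{\frac{2\pi in}{\sqrt3}\cos\theta}$, so that $q^{-m}$ has modulus $e^{\frac{2\pi m}{\sqrt3}\sin\theta}$; this modulus is the natural normalizing factor. Since $e^{ik\theta/2}f_{k,m}(\frac1{\sqrt3}e^{i\theta})$ is real by Proposition 2.1, I would insert the $q$-expansion $f_{k,m}=q^{-m}+\sum_{n\ge N}a(n)q^{n}$, with $N=2\ell_k+\varepsilon_k+1$, so that there are no terms of order strictly between $-m$ and $N$, and take real parts term by term, obtaining
\[
e^{\frac{ik\theta}{2}}f_{k,m}\!\left(\tfrac1{\sqrt3}e^{i\theta}\right)=\sum_{n}a(n)\,e^{-\frac{2\pi n}{\sqrt3}\sin\theta}\cos\!\left(\tfrac{k\theta}{2}+\tfrac{2\pi n}{\sqrt3}\cos\theta\right).
\]
The $n=-m$ term, after dividing by $e^{\frac{2\pi m}{\sqrt3}\sin\theta}$, is precisely the cosine recorded in the statement, and the whole problem reduces to estimating the normalized remainder $\sum_{n\ge N}a(n)\,e^{-\frac{2\pi(n+m)}{\sqrt3}\sin\theta}\cos(\cdots)$.

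The heart of the argument is a uniform bound on this remainder. On the arc one has $\sin\theta\ge\tfrac12$, and both hypotheses on $m$ force $N+m>0$, so every exponent $-\frac{2\pi(n+m)}{\sqrt3}\sin\theta$ (for $n\ge N$) is strictly negative and the remainder is bounded in absolute value by the positive series $\sum_{n\ge N}|a(n)|\,e^{-\frac{2\pi(n+m)}{\sqrt3}\sin\theta}$. To control the coefficients $a(n)=a_{k,m}(n)$ uniformly in $k$ and $m$ I would use the explicit factorization $f_{k,m}=(\varDelta_3^+)^{\ell_k}\varDelta_{3,r_k}F_f(j_3^+)$ from $(1)$: the $q$-expansions of $\varDelta_3^+$, $j_3^+$ and $\varDelta_{3,r_k}$, together with those of the powers $(\varDelta_3^+)^{\pm|\ell_k|}$, admit explicit geometric majorants, while the Faber-type polynomial $F_f(j_3^+)$ is handled through the two-variable generating series $\sum_{m}f_{k,m}(z)\,p^{m}$, which has a closed form in $\varDelta_3^+$ and $j_3^+$ and encodes all the $a_{k,m}(n)$ at once. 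Substituting such a majorant and summing the resulting geometric series should produce a bound that is constant in $\theta$; the explicit constants $1.9674$ and $0.99728$ would then emerge from evaluating this majorant at the smallest admissible value of $\sin\theta$ on each subinterval.

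The split of $[\frac{\pi}{2},\frac{5\pi}{6}-\frac{12}{25m}]$ into the two ranges of (a) and (b) is governed by the decay rate $\frac{2\pi}{\sqrt3}\sin\theta$, which is largest near $\frac{i}{\sqrt3}$ and smallest near the elliptic corner $\rho_3=\frac1{\sqrt3}e^{5\pi i/6}$, where $\sin\theta\to\tfrac12$. On the bulk range $[\frac{\pi}{2},\frac{23}{10}]$ the quantity $\sin\theta$ stays comfortably above $\tfrac12$, the geometric series converges rapidly, and the weaker hypothesis $m\ge 9|\ell_k|-2\ell_k+18$ already delivers the bound in (a). On the near-corner range $[\frac{23}{10},\frac{5\pi}{6}-\frac{12}{25m}]$ the decay degrades to its slowest rate $\frac{\pi}{\sqrt3}$, so I would need the stronger hypothesis $m\ge 18|\ell_k|+23$ to overcome the growth of $|a(n)|$ and reach the sharper bound in (b); the excised window of width $\frac{12}{25m}$ at $\rho_3$ is left for the proof of Theorem 1.1, where the forced vanishing $v_{\rho_3}(f_{k,m})\ge t_k$ of Proposition 2.3 and the valence formula of Proposition 2.2 account for the zeros there. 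I expect the main obstacle to be exactly this uniform coefficient estimate near the corner: bounding $|a_{k,m}(n)|$ sharply enough, simultaneously for all weights $k$, to keep the decaying remainder below the oscillation of the main cosine and to yield the stated numerical constants.
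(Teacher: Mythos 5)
Your plan has a fatal gap, and it is visible already in the shape of the target inequality: the main term is $2\cos\bigl(\frac{k\theta}{2}-\frac{2\pi m}{\sqrt3}\cos\theta\bigr)$, with coefficient $2$. If you take real parts of the $q$-expansion term by term, the principal term $q^{-m}$ contributes, after your normalization, only $\cos\bigl(\frac{k\theta}{2}-\frac{2\pi m}{\sqrt3}\cos\theta\bigr)$, with coefficient $1$. The second copy of the cosine is not part of a negligible error: it is produced by the infinite tail $\sum_{n\ge N}a(n)q^{n}$, because on the arc the point $z=\frac{1}{\sqrt3}e^{i\theta}$ satisfies $-\frac{1}{3z}=-\bar z$ and $f_{k,m}(z)=(\sqrt3 z)^{-k}f_{k,m}\bigl(-\frac{1}{3z}\bigr)$, which forces the tail to rebuild a contribution of exactly the same exponential size $e^{\frac{2\pi m}{\sqrt3}\sin\theta}$ as the principal term. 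So the remainder you propose to estimate is not small; a piece of it equals the missing $\cos(\cdots)$ on the nose, and the rest is small only because of cancellation.

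This also shows why your absolute-value bound cannot work quantitatively. Coefficients of weakly holomorphic forms with principal part $q^{-m}$ grow like $|a(n)|\approx e^{\frac{4\pi}{\sqrt3}\sqrt{mn}}$ (a circle-method/Bessel law), so the terms of your positive majorant $\sum_{n\ge N}|a(n)|e^{-\frac{2\pi(n+m)}{\sqrt3}\sin\theta}$ peak near $n\approx m/\sin^{2}\theta$ at size $e^{\frac{2\pi m}{\sqrt3}\left(\frac{1}{\sin\theta}-\sin\theta\right)}$, which is exponentially large in $m$ for every $\theta\neq\frac{\pi}{2}$; near $\theta=\frac{5\pi}{6}$ it is roughly $e^{\sqrt3\,\pi m}$. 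No uniform coefficient estimate, however clever, can repair this, since the smallness of the remainder is entirely due to oscillation. The paper therefore never expands $f_{k,m}$ at the cusp: it represents $f_{k,m}(z)$ by the contour integral of Proposition 3.1, moves the contour from height $A$ down to $A'=0.35$ (case (a)) or $A'=0.15$ (case (b)), and the residues at the poles $\tau=z$ and $\tau=-\frac{1}{3z}$ are precisely the two exponentials whose sum is $2\cos(\cdots)$; in case (b) four further translates $\frac{z}{3z+1},\frac{-1}{3z+3},\frac{-z-1}{3z+2},\frac{3z+1}{6z+3}$ enter the strip and yield the extra terms $B_{k,m}(\theta)$, $C_{k,m}(\theta)$, bounded separately in Lemma 5.1. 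Only the leftover low-height integral is estimated by absolute values (Lemmas 4.1 and 5.1), which is legitimate there because the integrand is genuinely small on that contour; this is also where the hypotheses $m\ge 9|\ell_k|-2\ell_k+18$ and $m\ge 18|\ell_k|+23$ and the two numerical constants actually arise. Your generating-function idea is the right ingredient, but it must be used as an integral representation with residues extracted, not as a source of coefficient majorants.
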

\ \\
We will prove Proposition 2.4 in §4 and §5.\\
\\
\\
{\large(The proof of Theorem 1.1)}
\begin{proof}

Put $h(\theta):=e^{-2\pi m\frac{1}{\sqrt{3}}\cos{\theta}}e^{\frac{ik\theta}{2}}f_{k,m}(\frac{1}{\sqrt{3}}e^{i\theta})$ and $\alpha(\theta):=\frac{k\theta}{2}-2\pi m\frac{1}{\sqrt{3}}\cos{\theta}\ (\frac{\pi}{2}\le \theta \le \frac{5\pi}{6}-\frac{12
}{25m})$. Then, Proposition 2.1 implies $h(\theta)$ is a real valued function. $\alpha(\theta)$ is monotonically increasing on the interval $[\frac{\pi}{2},\frac{5\pi}{6}-\frac{12
}{25m}]$ and the image of $\alpha$ contains the interval $[\frac{k}{4}\pi,(\frac{5k}{12}+m-\frac{1}{3})\pi]$. By Proposition 2.4,
\begin{align}
&\theta \in\left[\frac{\pi}{2},\frac{23}{10}\right]\ \ \Longrightarrow \ \ |h(\theta)-2\cos{\alpha(\theta)}|<1.9674,\\
&\theta \in\left[\frac{23}{10},\frac{5\pi}{6}-\frac{12}{25m}\right]\ \ \Longrightarrow \ \ |h(\theta)-2\cos{\alpha(\theta)}|<0.99728.
\end{align}

Let $\lfloor \cdot \rfloor$ be the floor function and $\lceil \cdot \rceil$ be the ceiling function. When $\alpha(\theta)$ moves from $\lceil\frac{k}{4}\rceil \pi$ to $(\lfloor \frac{5k}{12}\rfloor+m-1)\pi$, there are exactly $\lfloor \frac{5k}{12}\rfloor+m-\lceil\frac{k}{4}\rceil$ values of $\theta$ in the interval $\alpha^{-1}(\left[\lceil\frac{k}{4}\rceil \pi,(\lfloor \frac{5k}{12}\rfloor+m-1)\pi\right])$ where $2\cos{\alpha(\theta)}$ has absolute value $2$, alternating between $2$ and $-2$ as $\theta$ increase. $(2),(3)$, and the intermediate value theorem imply that $h(\theta)$ must have at least $\lfloor \frac{5k}{12}\rfloor+m-\lceil\frac{k}{4}\rceil-1$ distinct zeros as $\theta$ moves through in the interval $\alpha^{-1}((\lceil\frac{k}{4}\rceil \pi,(\lfloor \frac{5k}{12}\rfloor+m-1)\pi))$.

When $\alpha(\theta)$ moves from $(\lfloor \frac{5k}{12}\rfloor+m-\frac{2}{3})\pi$ to $(\lfloor \frac{5k}{12}\rfloor+m-\frac{1}{3})\pi$, $2\cos{\alpha(\theta)}$ moves from $\pm1$ to $\mp1$. $(3)$ and the intermediate value theorem imply that $h(\theta)$ must have a zero in the interval $\alpha^{-1}((\lfloor \frac{5k}{12}\rfloor+m-\frac{2}{3})\pi,(\lfloor \frac{5k}{12}\rfloor+m-\frac{1}{3})\pi))$.\\
Thus $h(\theta)$ must have at least $\lfloor \frac{5k}{12}\rfloor+m-\lceil\frac{k}{4}\rceil$ distinct zeros as $\theta$ moves through in the interval $(\frac{\pi}{2},\frac{5\pi}{6}-\frac{12}{25m})$.\\

We note that $\lfloor \frac{5k}{12}\rfloor+m-\lceil\frac{k}{4}\rceil=2\ell_k+m+\varepsilon_k$, the main theorem follows from Propositions 2.2, 2.3, and a simple calculation.
\end{proof}

\section{Integral formula for $f_{k,m}$}
Referring to [2, p. 756] we have the following integral formula of $f_{k,m}$ which plays an important role in the proof of Proposition 1.2.
\begin{pro}
Let $f_k:=f_{k,-2\ell_k-\varepsilon_k}=(\varDelta_3^+)^{\ell_k}\varDelta_{3,r_k}$, we have
$$f_{k,m}(z)=\frac{1}{2\pi i}\oint_C\frac{f_k(z)f_{2-k}(\tau){q'}^{-m-1}}{j_3^+(\tau)-j_3^+(z)}dq',$$
where $q':=e^{2\pi i \tau}$ and $C$ is the circle centered at 0 in the $q'$-plane with a sufficiently small radius.
\end{pro}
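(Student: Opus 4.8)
The plan is to read the contour integral as a coefficient extraction and then identify the extracted coefficient as $f_{k,m}$ by its defining principal part. By Cauchy's residue theorem, for any function $G$ holomorphic in a punctured neighbourhood of $q'=0$ one has $\frac{1}{2\pi i}\oint_C G(q')\,{q'}^{-m-1}\,dq'=[{q'}^m]\,G$, the coefficient of ${q'}^m$ in the Laurent expansion of $G$ about $q'=0$. Hence the proposition is equivalent to the two-variable generating identity
$$\frac{f_k(z)\,f_{2-k}(\tau)}{j_3^+(\tau)-j_3^+(z)}=\sum_{m\ge -2\ell_k-\varepsilon_k} f_{k,m}(z)\,{q'}^m,$$
understood as a Laurent series in $q'=e^{2\pi i\tau}$ (valid for ${\rm Im}\,\tau$ large) whose coefficients are functions of $z$. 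I would therefore set $g_m(z):=[{q'}^m]\,F(\tau,z)$, where $F(\tau,z)$ denotes the left-hand kernel, and aim to show $g_m=f_{k,m}$ using the uniqueness of the canonical basis.

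First I would check that each $g_m\in M_k^!(\Gamma_0^+(3))$. For $q'$ on the contour ${\rm Im}\,\tau$ is large, so $j_3^+(\tau)$ is large and $j_3^+(\tau)\ne j_3^+(z)$ for $z$ in any compact subset of $\mathbb{H}$; thus $F(\cdot,z)$ is holomorphic there. Since $f_k$ is weight $k$ and $j_3^+$ is $\Gamma_0^+(3)$-invariant, $F(\tau,\gamma z)=(cz+d)^kF(\tau,z)$ for every $\gamma=\left(\begin{smallmatrix}a&b\\c&d\end{smallmatrix}\right)\in\Gamma_0^+(3)$, and this weight-$k$ transformation passes to the $q'$-coefficients $g_m$; meromorphy at the cusp is inherited from the building blocks. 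Hence $g_m$ is a weight-$k$ weakly holomorphic modular form.

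The heart of the argument is to compute the $q$-expansion of $g_m$ in $z$ and match it with the normalization $f_{k,m}=q^{-m}+O(q^{2\ell_k+\varepsilon_k+1})$. Using the leading expansions $j_3^+(\tau)={q'}^{-1}+O(1)$, $f_k(z)=q^{2\ell_k+\varepsilon_k}+\cdots$, and $f_{2-k}(\tau)={q'}^{-2\ell_k-\varepsilon_k-1}+\cdots$ (the index relation $2\ell_{2-k}+\varepsilon_{2-k}=-2\ell_k-\varepsilon_k-1$ for the dual weight, which I would verify case by case in $r_k$), I would expand $\frac{1}{j_3^+(\tau)-j_3^+(z)}=\sum_{n\ge0}P_n(j_3^+(z))\,{q'}^{n+1}$ with $P_n$ monic of degree $n$, so that each $g_m$ is $f_k(z)$ times a polynomial in $j_3^+(z)$, i.e.\ exactly the product shape $(\varDelta_3^+)^{\ell_k}\varDelta_{3,r_k}F_f(j_3^+)$ of a basis element. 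To pin down the coefficients I would exploit the symmetry $F(\tau,z)=-\,f_{2-k}(\tau)f_k(z)/(j_3^+(z)-j_3^+(\tau))$, which is the weight-$(2-k)$ analogue of the same kernel with $\tau$ and $z$ interchanged; this Duke--Jenkins type duality forces the $q^n$-coefficient of $g_m$ to vanish for $-m<n\le 2\ell_k+\varepsilon_k$ and to equal $1$ at $n=-m$. By the uniqueness of $\{f_{k,m}\}$ this gives $g_m=f_{k,m}$, proving the identity and hence the integral formula. The main obstacle is precisely this last matching: controlling the whole intermediate range of coefficients, rather than just the leading term, which is where the duality symmetry of $F$ together with the explicit normalizations $\varDelta_{3,r_k}=q^{\varepsilon_k}+O(q^{\varepsilon_k+1})$ and $\varDelta_{3,r_k}\varDelta_{3,14-r_k}=\varDelta_{3,14}$ must be used carefully.
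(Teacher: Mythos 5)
The paper itself gives no proof of this proposition --- it is quoted from [2, p.~756] --- so your proposal has to stand on its own. Its first half does: the coefficient-extraction reading of the contour integral, the weight-$k$ transformation and weak holomorphy of each coefficient $g_m(z):=[{q'}^m]F(\tau,z)$, the expansion $\frac{1}{j_3^+(\tau)-j_3^+(z)}=\sum_{n\ge0}P_n(j_3^+(z))\,{q'}^{n+1}$ with $P_n$ monic of degree $n$, the index relation $2\ell_{2-k}+\varepsilon_{2-k}=-2\ell_k-\varepsilon_k-1$, and the conclusion that $g_m=(\varDelta_3^+)^{\ell_k}\varDelta_{3,r_k}\cdot\bigl(\text{monic polynomial of degree }2\ell_k+\varepsilon_k+m\text{ in }j_3^+\bigr)$, hence $g_m=q^{-m}+\cdots$, are all correct.

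The genuine gap is the step you yourself flag as the main obstacle, and the mechanism you propose for it does not work. The antisymmetry $F_k(\tau,z)=-F_{2-k}(z,\tau)$ relates two Laurent expansions of one meromorphic kernel that converge in \emph{disjoint} regions (${\rm Im}\,\tau$ large relative to ${\rm Im}\,z$, versus ${\rm Im}\,z$ large relative to ${\rm Im}\,\tau$), separated by the polar divisor $j_3^+(\tau)=j_3^+(z)$; one may not equate coefficients across this divide, exactly as $\frac{1}{x-y}$ has different Laurent expansions in $|x|>|y|$ and $|x|<|y|$. Indeed, naive matching would give $[q^{-m}]g_m=-[{q'}^m]g^{(2-k)}_{-m}=0$, since the weight-$(2-k)$ coefficients $g^{(2-k)}_n$ vanish for all $n\le 2\ell_k+\varepsilon_k$ and $-m\le 2\ell_k+\varepsilon_k$ --- contradicting the value $1$ you correctly computed, which shows the argument is inconsistent. (The coefficient duality $a_k(m,n)=-a_{2-k}(n,m)$ is true, but it is normally \emph{deduced} from this proposition or from the completed basis, so invoking it here also risks circularity.) What actually forces the gap condition is the pole on the diagonal: enlarge $C$ from radius $<|q|$ to a fixed radius $R>|q|$. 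The residue of $F(\tau,z){q'}^{-m-1}$ at $q'=q$ equals $-q^{-m}$, because $f_k(z)f_{2-k}(z)=\varDelta_{3,14}(z)/\varDelta_3^+(z)$ (from $\varDelta_{3,r_k}\varDelta_{3,14-r_k}=\varDelta_{3,14}$) and $\frac{dj_3^+}{d\tau}=-2\pi i\,\varDelta_{3,14}/\varDelta_3^+$, so
\begin{equation*}
g_m(z)=q^{-m}+\frac{1}{2\pi i}\oint_{|q'|=R}\frac{f_k(z)f_{2-k}(\tau)}{j_3^+(\tau)-j_3^+(z)}\,{q'}^{-m-1}dq',
\end{equation*}
and the remaining integral is holomorphic and $1$-periodic in $z$ and is $O(q^{2\ell_k+\varepsilon_k+1})$ as ${\rm Im}\,z\to\infty$ (since $f_k(z)=O(q^{2\ell_k+\varepsilon_k})$ and $1/(j_3^+(\tau)-j_3^+(z))=O(q)$ uniformly on the contour), hence contributes no Fourier modes $q^n$ with $n\le 2\ell_k+\varepsilon_k$. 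This yields $g_m=q^{-m}+O(q^{2\ell_k+\varepsilon_k+1})$ and, by uniqueness of the basis, $g_m=f_{k,m}$. Note that this residue computation is exactly the one the paper performs in \S4, equations (5)--(7), where crossing the pole at $\tau=z$ produces the main term $q^{-m}$; that is the tool your proof is missing.
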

Since $q'=e^{2\pi i \tau}$, $\frac{dq'}{d\tau}=2\pi iq'$, $\ell_{2-k}=-\ell_k-1$, $r_{2-k}=14-r_k$, and $\frac{dj_3^+}{d\tau}=-2\pi i\frac{\varDelta_{3,14}}{\varDelta_3^+}$, we can rearrange the integral formula of $f_{k,m}$ as follows.
\begin{align}
f_{k,m}(z) &=\int_{-\frac{1}{2}+iA}^{\frac{1}{2}+iA}\frac{f_k(z)f_{2-k}(\tau)e^{-2\pi im\tau}}{j_3^+(\tau)-j_3^+(z)}d\tau\notag\\
\
&=\int_{-\frac{1}{2}+iA}^{\frac{1}{2}+iA}\frac{\{\varDelta_3^+(z)^{\ell_k}\varDelta_{3,r_k}(z)\}\{\varDelta_3^+(\tau)^{-\ell_k-1}\varDelta_{3,14-r_k}(\tau)\}e^{-2\pi im\tau}}{j_3^+(\tau)-j_3^+(z)}d\tau\notag\\
&=\int_{-\frac{1}{2}+iA}^{\frac{1}{2}+iA}\frac{e^{-2\pi im\tau}\varDelta_3^+(z)^{\ell_k}\varDelta_{3,r_k}(z)\varDelta_{3,14-r_k}(\tau)}{\varDelta_3^+(\tau)^{\ell_k+1}(j_3^+(\tau)-j_3^+(z))}d\tau\notag\\
&=\int_{-\frac{1}{2}+iA}^{\frac{1}{2}+iA}e^{-2\pi im\tau}\frac{\varDelta_3^+(z)^{\ell_k}\varDelta_{3,r_k}(z)}{\varDelta_3^+(\tau)^{\ell_k}\varDelta_{3,r_k}(\tau)}\cdot\frac{\varDelta_{3,14}(\tau)}{\varDelta_3^+(\tau)}\cdot\frac{1}{j_3^+(\tau)-j_3^+(z)}d\tau\notag\\
&=\int_{-\frac{1}{2}+iA}^{\frac{1}{2}+iA}\frac{e^{-2\pi im\tau}}{-2\pi i}\frac{\varDelta_3^+(z)^{\ell_k}\varDelta_{3,r_k}(z)}{\varDelta_3^+(\tau)^{\ell_k}\varDelta_{3,r_k}(\tau)}\frac{\frac{d}{d\tau}(j_3^+(\tau)-j_3^+(z))}{j_3^+(\tau)-j_3^+(z)}d\tau,
\end{align}
where $A>0$ is sufficiently large. We write briefly\\
\begin{align}
G(\tau,z):&=e^{-2\pi im\tau}\frac{\varDelta_3^+(z)^{\ell_k}\varDelta_{3,r_k}(z)}{\varDelta_3^+(\tau)^{\ell_k}\varDelta_{3,r_k}(\tau)}\cdot\frac{\varDelta_{3,14}(\tau)}{\varDelta_3^+(\tau)}\cdot\frac{1}{j_3^+(\tau)-j_3^+(z)}\notag\\
&=\frac{e^{-2\pi im\tau}}{-2\pi i}\frac{\varDelta_3^+(z)^{\ell_k}\varDelta_{3,r_k}(z)}{\varDelta_3^+(\tau)^{\ell_k}\varDelta_{3,r_k}(\tau)}\frac{\frac{d}{d\tau}(j_3^+(\tau)-j_3^+(z))}{j_3^+(\tau)-j_3^+(z)}.
\end{align}

\section{The case of $\frac{\pi}{2}<\theta<\frac{23}{10}$}

Put $z=\frac{1}{\sqrt{3}}e^{i\theta}$ $(\frac{\pi}{2}<\theta<\frac{23}{10})$ and $A'=0.35$. We move the countor of integration given in $(4)$ downward to a height $A'$. As we do so, each pole $\tau_0$ of $G(\tau,z)$ in the region defined by
\begin{center}
$-\frac{1}{2}<{\rm Re}(\tau)<\frac{1}{2}$ and $A'<{\rm Im}(\tau)<A$
\end{center}
will contribute a term $2\pi i\cdot{\rm Res}_{\tau=\tau_0}G(\ \cdot\ ,z)$ to the equation. The pole of $G(\ \cdot\ ,z)$ occurs only when $\tau=z$ or $\frac{-1}{3z}$ which are equivalent to $z$ under the action of $\Gamma_0^+(3)$. Then the residue theorem yields
\begin{align}
\int_{-\frac{1}{2}+iA'}^{\frac{1}{2}+iA'}G(\tau,z)d\tau&=f_{k,m}(z)-\int_{-\frac{1}{2}+iA'}^{-\frac{1}{2}+iA}G(\tau,z)d\tau+\int_{\frac{1}{2}+iA'}^{\frac{1}{2}+iA}G(\tau,z)d\tau+2\pi i\sum_{\tau=z,\frac{-1}{3z}}{\rm Res}_\tau G(\ \cdot\ ,z)\notag\\
&=f_{k,m}(z)+2\pi i\sum_{\tau=z,\frac{-1}{3z}}{\rm Res}_\tau G(\ \cdot\ ,z).
\end{align}
By using (5), we can calculate  ${\rm Res}_z G(\ \cdot\ ,z)$ and ${\rm Res}_{\frac{-1}{3z}}G(\ \cdot\ ,z)$ directly
\begin{align}
{\rm Res}_z G(\ \cdot\ ,z)&=\frac{e^{-2\pi imz}}{-2\pi i}\notag\\
&=\frac{e^{2\pi m\frac{1}{\sqrt{3}}\sin\theta}e^{-2\pi im\frac{1}{\sqrt{3}}\cos\theta}}{-2\pi i},\\
{\rm Res}_{\frac{-1}{3z}}G(\ \cdot\ ,z)&=\frac{e^{-2\pi i m\frac{-1}{3z}}}{-2\pi i(\sqrt{3}z)^k}\notag\\
&=\frac{e^{2\pi m\frac{1}{\sqrt{3}}\sin\theta}e^{2\pi im\frac{1}{\sqrt{3}}\cos\theta}e^{-ik\theta}}{-2\pi i}.
\end{align}
By substituting (7) and (8) for (6), we have
$$\int_{-\frac{1}{2}+iA'}^{\frac{1}{2}+iA'}G(\tau,z)d\tau=f_{k,m}(z)-(e^{2\pi m\frac{1}{\sqrt{3}}\sin\theta}e^{-2\pi im\frac{1}{\sqrt{3}}\cos\theta}+e^{2\pi m\frac{1}{\sqrt{3}}\sin\theta}e^{2\pi im\frac{1}{\sqrt{3}}\cos\theta}e^{-ik\theta}).$$
Multiplying the both side by $e^{-2\pi m\frac{1}{\sqrt{3}}\sin\theta}e^{\frac{ik\theta}{2}}$, we have
\begin{align}
e^{-2\pi m\frac{1}{\sqrt{3}}\sin\theta}e^{\frac{ik\theta}{2}}f_{k,m}\left(\frac{1}{\sqrt{3}}e^{i\theta}\right)-2\cos&\left(\frac{k\theta}{2}-2\pi m\frac{1}{\sqrt{3}}\cos\theta\right)\notag\\
&=e^{-2\pi m\frac{1}{\sqrt{3}}\sin\theta}e^{\frac{ik\theta}{2}}\int_{-\frac{1}{2}+iA'}^{\frac{1}{2}+iA'}G(\tau,z)d\tau.
\end{align}
Hence, the absolute value of the left hand side of (9) is bounded above by
\begin{align}
e^{-2\pi m\frac{1}{\sqrt{3}}\sin{\theta}}&\int_{-\frac{1}{2}+iA'}^{\frac{1}{2}+iA'}|G(\tau,z)|d|\tau|\notag\\ 
&\le e^{-2\pi m\frac{1}{\sqrt{3}}\sin{\theta}}\int_{-\frac{1}{2}+iA'}^{\frac{1}{2}+iA'}e^{2\pi m{\rm Im}(\tau)}\frac{|\varDelta_3^+(z)|^{\ell_k}}{|\varDelta_3^+(\tau)|^{\ell_k}}\cdot\frac{|\varDelta_{3,r_k}(z)||\varDelta_{3,14-r_k}(\tau)|}{|\varDelta_3^+(\tau)||j_3^+(\tau)-j_3^+(z)|}d|\tau|\notag\\ 
&\le e^{-2\pi m(\frac{1}{\sqrt{3}}\sin{(\frac{23}{10})}-0.35)}\int_{-\frac{1}{2}+iA'}^{\frac{1}{2}+iA'}\frac{|\varDelta_3^+(z)|^{\ell_k}}{|\varDelta_3^+(\tau)|^{\ell_k}}\cdot\frac{|\varDelta_{3,r_k}(z)||\varDelta_{3,14-r_k}(\tau)|}{|\varDelta_3^+(\tau)||j_3^+(\tau)-j_3^+(z)|}d|\tau|.
\end{align}
To prove the Proposition 2.4 (a), we show the next lemma.
\begin{lem}
Let $z=\frac{1}{\sqrt{3}}e^{i \theta}$ with $\frac{\pi}{2}\le \theta <\frac{23}{10}$ and $\tau =x+0.35i$ with $-\frac{1}{2}\le x\le \frac{1}{2}$,\\
{\rm (a)} $2.8964\times10^{-4}<|\Delta_3^+(z)|<1.0258\times10^{-2}$.\\
{\rm (b)} $4.3086\times10^{-4}<|\Delta_3^+(\tau)|<5.0415\times10^{-2}$.\\
{\rm (c)} $|j_3^+(\tau)-j_3^+(z)|>0.41095$.\\
{\rm (d)} For $r_k=0,4,6,8,10,14,\ |\Delta_{3,r_k}(z)||\Delta_{3,14-r_k}(\tau)|<3.1448$.\\
\end{lem}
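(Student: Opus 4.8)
The plan is to reduce every quantity in the lemma to the Dedekind eta product and estimate the resulting infinite products term by term. On both curves the imaginary part is bounded below---for $z=\frac1{\sqrt3}e^{i\theta}$ we have $\mathrm{Im}(z)=\frac{\sin\theta}{\sqrt3}$, and on $\tau=x+0.35i$ the height is fixed---so $|q|=e^{-2\pi\,\mathrm{Im}(z)}$ and $|q'|=e^{-0.7\pi}$ stay bounded away from $1$ and the eta products converge geometrically. Expanding $\eta(z)=q^{1/24}\prod_{n\ge1}(1-q^n)$ gives $\Delta_3^+=q^2\prod_{n\ge1}(1-q^n)^{12}(1-q^{3n})^{12}$, while $j_3^+$ is already an eta quotient with leading term $q^{-1}$. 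The one elementary tool throughout is $1-|q|^n\le|1-q^n|\le 1+|q|^n$, which pinches each factor between quantities depending only on $|q|$.

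For (a) this yields $|q|^2\prod(1-|q|^n)^{12}(1-|q|^{3n})^{12}\le|\Delta_3^+(z)|\le|q|^2\prod(1+|q|^n)^{12}(1+|q|^{3n})^{12}$, where $|q|^2=e^{-4\pi\sin\theta/\sqrt3}$ is monotone in $\theta$ on $[\frac\pi2,\frac{23}{10}]$ since $\sin\theta$ is decreasing there. I would check that each envelope is monotone in $|q|$ and hence attains its extreme values at the endpoints, evaluating the lower envelope at $\theta=\frac\pi2$ and the upper envelope at $\theta=\frac{23}{10}$ (truncating the products and bounding the geometric tails) to produce the two constants. Part (b) is the same computation but genuinely easier, since on $\tau=x+0.35i$ the modulus $|q'|=e^{-0.7\pi}$ is constant in $x$; only the arguments of the factors vary, so the pinching inequalities applied at this single value of $|q'|$ give the stated interval directly.

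Part (c) is where I expect the real difficulty, because it is a lower bound on a difference and a priori admits cancellation. By Proposition 2.1 with $k=0$ (applicable because $j_3^+$ has real $q$-coefficients), $j_3^+(z)$ is real on the arc; writing $j_3^+(z)=w\in\mathbb R$ and $j_3^+(\tau)=u+iv$ one has $|j_3^+(\tau)-j_3^+(z)|^2=(u-w)^2+v^2\ge v^2$, which settles the bound wherever $|\mathrm{Im}\,j_3^+(\tau)|\ge0.41095$. The delicate regime is $x$ near $0$ and $\pm\frac12$, where $\tau$ is nearly a real translate, $v\to0$, and the imaginary part is useless. There I would instead estimate the real parts, showing via the eta-quotient expansion of $j_3^+$ that $\mathrm{Re}\,j_3^+(\tau)$ lies above the maximum, or below the minimum, of the real interval $j_3^+(\mathrm{arc})$ by at least the required margin. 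The obstacle is precisely this simultaneous control: one must pin down the full real range of $j_3^+$ along the arc together with the joint behaviour of $\mathrm{Re}\,j_3^+$ and $\mathrm{Im}\,j_3^+$ along the segment, rather than a single magnitude estimate as in (a), (b), (d).

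For (d) I would expand each $\Delta_{3,r_k}$ as a $q$-series from its definition in terms of the $E_k^+$ (using $E_k^+=\frac1{1+3^{k/2}}(E_k(z)+3^{k/2}E_k(3z))$ and the coefficients $-\frac{2k}{B_k}\sigma_{k-1}(n)$ of $E_k$), bound $|\Delta_{3,r_k}(z)|$ on the arc and $|\Delta_{3,14-r_k}(\tau)|$ on the segment by truncating and controlling the tails with standard divisor-sum estimates, and then maximize the product over the six values $r_k\in\{0,4,6,8,10,14\}$. The relation $\Delta_{3,r_k}\Delta_{3,14-r_k}=\Delta_{3,14}$ can be used to cross-check the estimates and to collapse the six products to essentially three distinct pairings; this step is laborious rather than conceptually hard, since each pairing still requires an honest coefficient estimate at the fixed heights involved.
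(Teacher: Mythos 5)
Your treatment of (a), (b), and (d) is essentially the paper's own: for (a)/(b) the paper likewise reduces everything to bounds depending only on $|q|$ (using Euler's pentagonal number theorem for the upper bound where you use $|1-q^n|\le 1+|q|^n$, and bounding each factor at whichever value of $\mathrm{Im}(z)\in[\sin(23/10)/\sqrt3,\,1/\sqrt3]$ is worst, which avoids your monotonicity check on the combined envelope), and for (d) it performs exactly the truncation-plus-tail coefficient estimate you describe, via $|s_{k,n}|\le 504(n+1)^k$ and $|t_{k_1,k_2,n}|\le 504^2(n+1)^{k_1+k_2+1}$.

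The genuine gap is in (c), and it is not where you think it is. You place the delicate regime at $x$ near $0$ and $\pm\frac12$; but the very symmetry you invoke (Proposition 2.1 with $k=0$, i.e.\ $\overline{j_3^+(\tau)}=j_3^+(-\bar\tau)$ combined with Fricke invariance) makes $j_3^+$ real on the \emph{entire} semicircle $|\tau|=\frac1{\sqrt3}$, and the segment $\mathrm{Im}\,\tau=0.35$ crosses that semicircle at $x=\pm\sqrt{\tfrac13-(0.35)^2}\approx\pm0.459$, i.e.\ at the points $\frac1{\sqrt3}e^{i\theta}$ with $\theta\approx 2.49$ and $\theta\approx 0.65$. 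At these crossings $\mathrm{Im}\,j_3^+(\tau)=0$ exactly, and $\mathrm{Re}\,j_3^+(\tau)=j_3^+\bigl(\tfrac1{\sqrt3}e^{2.49i}\bigr)$ lies, by monotonicity of $j_3^+$ along the circle, only a few tenths below the infimum $j_3^+\bigl(\tfrac1{\sqrt3}e^{2.3i}\bigr)\approx-40.02$ of the values on the arc: the entire constant $0.41095$ comes from precisely this gap. By contrast, at your proposed danger points the margins are enormous: at $x=0$ the point $\tau=0.35i$ is Fricke-equivalent to $\approx 0.952i$, so $j_3^+(\tau)\approx 4\times 10^2$, while at $x=\pm\frac12$ the value is real and below $-42$. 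So a case analysis organized around $x\approx 0,\pm\frac12$ would sail past the only points where the inequality is nearly an equality, and near the true critical points your dichotomy still needs joint quantitative control of $\mathrm{Re}\,j_3^+(\tau)$ and $\mathrm{Im}\,j_3^+(\tau)$ along the segment --- rigorous numerics that your sketch defers.

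The paper supplies exactly this control, packaged so that no case analysis is needed at all: it writes $|j_3^+(\tau)-j_3^+(z)|\ge|j_3^+(\tau)-66|-|66-j_3^+(z)|$ with $66=j_3^+(i/\sqrt3)$, proves $|j_3^+(\tau)-66|>106.42886$ on the whole segment using the factorization $j_3^+(\tau)-66=\bigl\{(\eta(\tau)/\eta(3\tau))^6-27(\eta(3\tau)/\eta(\tau))^6\bigr\}^2=\prod_{k=0}^{5}\bigl(\tfrac{\eta(\tau)}{\eta(3\tau)}-\sqrt3\,\omega^k\tfrac{\eta(3\tau)}{\eta(\tau)}\bigr)^2$ (with $\omega=e^{\pi i/3}$), each factor bounded below by a derivative (Lipschitz) estimate plus evaluation at the sample points $x=n/2000$, and proves $|66-j_3^+(z)|<106.01791$ on the arc by monotonicity; the difference of these two numbers is the stated $0.41095$, and their near-cancellation reflects exactly the crossing points identified above. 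If you wish to salvage your real/imaginary-part dichotomy, you must add $x\approx\pm0.459$ to your critical set and run a sampling-plus-derivative argument there anyway --- at which point you will have reproduced the paper's computation in a less convenient form.
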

\begin{proof}
We have used M{\scriptsize ATHEMATICA} 11 for the following computation.

(a) and (b)
$$\Delta_3^+(z)=e^{4\pi iz}\prod_{n=1}^{\infty}(1-e^{2\pi inz})^{12}\prod_{n=1}^{\infty}(1-e^{6\pi i nz})^{12}.$$
By Euler's pentagonal number theorem,
\begin{equation}
\left|\prod_{n=1}^{\infty}(1-e^{2\pi inz})\right|=\left|\sum_{n\in\mathbb{Z}}(-1)^ne^{2\pi iz\cdot\frac{3n^2-n}{2}}\right|\le\sum_{n\in\mathbb{Z}}e^{-\pi{\rm Im}(z)(3n^2-n)}\notag
\end{equation}
and by the triangle inequality,
\begin{equation}
\left|\prod_{n=1}^{\infty}(1-e^{2\pi inz})\right|=\prod_{n=1}^{\infty}|1-e^{2\pi inz}|\ge\prod_{n=1}^{\infty}|1-|e^{2\pi inz}||=\prod_{n=1}^{\infty}(1-e^{-2\pi n{\rm Im}(z)}\notag
\end{equation}
for all $z\in\mathbb{H}$.\\
Hence, for $z=\frac{1}{\sqrt{3}}e^{i\theta}$ with $\frac{\pi}{2}<\theta<\frac{23}{10}$, we have
\begin{align}
|\Delta_3^+(z)|&\le e^{-4\pi {\rm Im}(z)}\left(\sum_{n\in\mathbb{Z}}e^{-\pi{\rm Im}(z)(3n^2-n)}\right)^{12}\left(\sum_{n\in\mathbb{Z}}e^{-3\pi{\rm Im}(z)(3n^2-n)}\right)^{12}\notag\\
&\le e^{-\frac{4\pi}{\sqrt{3}}\sin{(\frac{23}{10})}}\left(\sum_{n\in\mathbb{Z}}e^{-\frac{\pi}{\sqrt{3}}\sin{(\frac{23}{10})}(3n^2-n)}\right)^{12}\left(\sum_{n\in\mathbb{Z}}e^{-\sqrt{3}\pi\sin{(\frac{23}{10})}(3n^2-n)}\right)^{12}\notag\\
&<1.0258\times10^{-2}\notag
\end{align}
and
\begin{align}
|\Delta_3^+(z)|&\ge e^{-4\pi {\rm Im}(z)}\prod_{n=1}^{\infty}(1-e^{-2\pi n{\rm Im}(z)})^{12}\prod_{n=1}^{\infty}(1-e^{-6\pi n{\rm Im}(z)})^{12}\notag\\
&\ge e^{-\frac{4\pi}{\sqrt{3}}}\prod_{n=1}^{\infty}(1-e^{-\frac{2}{\sqrt{3}}\pi\sin{(\frac{23}{10})}n})^{12}\prod_{n=1}^{\infty}(1-e^{-2\sqrt{3}\pi\sin{(\frac{23}{10})} n})^{12}\notag\\
&>2.8964\times10^{-4}\notag
\end{align}
Also for $\tau=x+0.35i$ with $-\frac{1}{2}\le x\le\frac{1}{2}$, we have
\begin{align}
|\Delta_3^+(\tau)|&\le e^{-4\pi {\rm Im}(\tau)}\left(\sum_{n\in\mathbb{Z}}e^{-\pi{\rm Im}(\tau)(3n^2-n)}\right)^{12}\left(\sum_{n\in\mathbb{Z}}e^{-3\pi{\rm Im}(\tau)(3n^2-n)}\right)^{12}\notag\\
&=e^{-1.4\pi}\left(\sum_{n\in\mathbb{Z}}e^{-0.35\pi(3n^2-n)}\right)^{12}\left(\sum_{n\in\mathbb{Z}}e^{-1.05\pi(3n^2-n)}\right)^{12}\notag\\
&<5.0415\times10^{-2}\notag
\end{align}
and
\begin{align}
|\Delta_3^+(\tau)|&\ge e^{-4\pi {\rm Im}(\tau)}\prod_{n=1}^{\infty}(1-e^{-2\pi n{\rm Im}(\tau)})^{12}\prod_{n=1}^{\infty}(1-e^{-6\pi n{\rm Im}(\tau)})^{12}\notag\\
&=e^{-1.4\pi}\prod_{n=1}^{\infty}(1-e^{-0.7\pi n})^{12}\prod_{n=1}^{\infty}(1-e^{-2.1\pi n})^{12}\notag\\
&>4.3086\times10^{-4}.\notag
\end{align}

(c)
$$|j_3^+(\tau)-j_3^+(z)|\ge\left|j_3^+(\tau)-j_3^+\left(\frac{1}{\sqrt{3}}e^{\frac{\pi i}{2}}\right)\right|-\left|j_3^+\left(\frac{1}{\sqrt{3}}e^{\frac{\pi i}{2}}\right)-j_3^+(z)\right|.$$
First, we estimate $|j_3^+(\tau)-j_3^+(\frac{1}{\sqrt{3}}e^{\frac{\pi i}{2}})|$.\\
Put $\omega=e^{\frac{\pi i}{3}}$,
\begin{align}
j_3^+(\tau)-j_3^+\left(\frac{1}{\sqrt{3}}e^{\frac{\pi i}{2}}\right)&=j_3^+(\tau)-66\notag\\
&=\left(\frac{\eta(\tau)}{\eta(3\tau)}\right)^{12}+3^6\left(\frac{\eta(3\tau)}{\eta(\tau)}\right)^{12}-54\notag\\
&=\left\{\left(\frac{\eta(\tau)}{\eta(3\tau)}\right)^{6}-3^3\left(\frac{\eta(3\tau)}{\eta(\tau)}\right)^{6}\right\}^2\notag\\
&=\prod_{k=0}^{5}\left(\frac{\eta(\tau)}{\eta(3\tau)}-\sqrt{3}\omega^k\frac{\eta(3\tau)}{\eta(\tau)}\right)^2.\notag
\end{align}
Hence, we have
$$\left|j_3^+(\tau)-j_3^+\left(\frac{1}{\sqrt{3}}e^{\frac{\pi i}{2}}\right)\right|=\prod_{k=0}^{5}\left|\frac{\eta(\tau)}{\eta(3\tau)}-\sqrt{3}\omega^k\frac{\eta(3\tau)}{\eta(\tau)}\right|^2.$$
Put
\begin{align}
\eta(\tau)&=q^{\frac{1}{24}}\sum_{n=0}^{\infty}\alpha_nq^n,\notag\\
\eta(3\tau)&=q^{\frac{1}{8}}\sum_{n=0}^{\infty}\beta_nq^n,\notag\\
\frac{\eta(\tau)}{\eta(3\tau)}&=q^{-\frac{1}{12}}\sum_{n=0}^{\infty}a_nq^n,\notag\\
\frac{\eta(3\tau)}{\eta(\tau)}&=q^{\frac{1}{12}}\sum_{n=0}^{\infty}b_nq^n,\notag
\end{align}
where $q=e^{2\pi i\tau}$. Then, Euler's pentagonal number theorem implies that $|\alpha_n|, |\beta_n|\le1$. We show that $|a_n|, |b_n|\le2^n$ for all $n\in\mathbb{Z}_{\ge0}$.\\
Since, $\alpha_n=\sum_{k=0}^{n}a_k\beta_{n-k}$, we have
$$|a_n|=|a_n\beta_0|=\left|-\sum_{k=0}^{n-1}a_k\beta_{n-k}+\alpha_n\right|\le\sum_{k=0}^{n-1}|a_k|+1.$$
Then, induction on $n$ implies $|a_n|\le2^n$. Similarly, we also have $|b_n|\le2^n$.\\
To calculate a lower bound for $j_3^+(\tau)-j_3^+\left(\frac{1}{\sqrt{3}}e^{\frac{\pi i}{2}}\right)$, we bound the derivative of $\frac{\eta(\tau)}{\eta(3\tau)}-\sqrt{3}\omega^k\frac{\eta(3\tau)}{\eta(\tau)}$ with respect to $x$ by
\begin{align}
&\left|\frac{d}{dx}\left(\frac{\eta(\tau)}{\eta(3\tau)}-\sqrt{3}\omega^k\frac{\eta(3\tau)}{\eta(\tau)}\right)\right|\notag\\
=&\left|\frac{d}{dx}\left(q^{-\frac{1}{12}}\sum_{n=0}^{\infty}a_nq^n-\sqrt{3}\omega^kq^{\frac{1}{12}}\sum_{n=0}^{\infty}b_nq^n\right)\right|\notag\\
=&\left|\sum_{n=0}^{\infty}2\pi i\left(n-\frac{1}{12}\right)a_nq^{n-\frac{1}{12}}-\sqrt{3}\omega^k\sum_{n=0}^{\infty}2\pi i\left(n+\frac{1}{12}\right)b_nq^{n+\frac{1}{12}}\right|\notag\\
\le&2\pi\sum_{n=0}^{\infty}\left(\left|n-\frac{1}{12}\right||a_n|e^{-0.7\pi (n-\frac{1}{12})}+\sqrt{3}\left(n+\frac{1}{12}\right)|b_n|e^{-0.7\pi (n+\frac{1}{12})}\right)\notag\\
=&2\pi\sum_{n=0}^{100}\left(\left|n-\frac{1}{12}\right||a_n|e^{-0.7\pi (n-\frac{1}{12})}+\sqrt{3}\left(n+\frac{1}{12}\right)|b_n|e^{-0.7\pi (n+\frac{1}{12})}\right)\notag\\
&\hspace{10pt}+2\pi\sum_{n=101}^{\infty}\left(\left|n-\frac{1}{12}\right||a_n|e^{-0.7\pi (n-\frac{1}{12})}+\sqrt{3}\left(n+\frac{1}{12}\right)|b_n|e^{-0.7\pi (n+\frac{1}{12})}\right)\notag\\
\le&2\pi\sum_{n=0}^{100}\left(\left|n-\frac{1}{12}\right||a_n|e^{-0.7\pi (n-\frac{1}{12})}+\sqrt{3}\left(n+\frac{1}{12}\right)|b_n|e^{-0.7\pi (n+\frac{1}{12})}\right)\notag\\
&\hspace{10pt}+2\pi\sum_{n=101}^{\infty}\left(\left|n-\frac{1}{12}\right|2^ne^{-0.7\pi (n-\frac{1}{12})}+\sqrt{3}\left(n+\frac{1}{12}\right)2^ne^{-0.7\pi (n+\frac{1}{12})}\right)\notag\\
<&4.0200.\notag
\end{align}
If we evaluate $|\frac{\eta(\tau)}{\eta(3\tau)}-\sqrt{3}\omega^k\frac{\eta(3\tau)}{\eta(\tau)}|$ at the points $\tau_0=\frac{n}{2000}+0.35i$ for $-1000\le n\le1000$, the spacing between the points is small enough that on the entire interval. When $|x-\frac{n}{2000}|\le\frac{1}{4000}$, the difference between $|\frac{\eta(\tau)}{\eta(3\tau)}-\sqrt{3}\omega^k\frac{\eta(3\tau)}{\eta(\tau)}|$ and its value at $\tau_0$ can not exceed $\sqrt{2}\times4.0200\times\frac{1}{4000}\le1.4213\times10^{-3}<|\frac{\eta(\tau_0)}{\eta(3\tau_0)}-\sqrt{3}\omega^k\frac{\eta(3\tau_0)}{\eta(\tau_0)}|$. Hence, we have
\begin{align}
\left|j_3^+(\tau)-j_3^+\left(\frac{1}{\sqrt{3}}e^{\frac{\pi i}{2}}\right)\right|&\ge\min_{-1000\le n\le1000}\prod_{k=0}^{5}\left|\left|\frac{\eta(\tau_0)}{\eta(3\tau_0)}-\sqrt{3}\omega^k\frac{\eta(3\tau_0)}{\eta(\tau_0)}\right|-1.4213\times10^{-3}\right|^2\notag\\
&>106.42886.\notag
\end{align}
Second, we estimate $|j_3^+(\frac{1}{\sqrt{3}}e^{\frac{\pi i}{2}})-j_3^+(z)|$.\\
Since $j_3^+(\frac{1}{\sqrt{3}}e^{\frac{\pi i}{2}})=66$, $j_3^+(\frac{1}{\sqrt{3}}e^{\frac{5\pi}{6}})=-42$, and $j_3^+(\frac{1}{\sqrt{3}}e^{i\theta})$ is real for $\theta\in[\frac{\pi}{2},\frac{5\pi}{6}]$, $j_3^+(\frac{1}{\sqrt{3}}e^{i\theta})$ must be monotonically increasing. Hence, we have
$$\left|j_3^+\left(\frac{1}{\sqrt{3}}e^{\frac{\pi i}{2}}\right)-j_3^+(z)\right|\le\left|j_3^+\left(\frac{1}{\sqrt{3}}e^{\frac{\pi i}{2}}\right)-j_3^+(\frac{1}{\sqrt{3}}e^{\frac{23}{10}i})\right|<106.01791$$
In conclusion,
$$|j_3^+(\tau)-j_3^+(z)|>106.42886-106.01791=0.41095.$$
\ \\

(d)\\
For $k,k_1,k_2\in\{0,4,6,8,10,14\}$, put
\begin{align}
E_k^+(z)&=\sum_{n=0}^{\infty}s_{k,n}q^n,\notag\\
E_{k_1}^+(z)E_{k_2}^+(z)&=\sum_{n=0}^{\infty}t_{k_1,k_2,n}q^n\notag.
\end{align}
Then,
 $$s_{k,n}=\begin{cases}1\ \ \ (n=0)\\-\frac{1}{1+3^{\frac{k}{2}}}\cdot\frac{2k}{B_k}\sigma_{k-1}(n)\ \ \ (n\notin3\mathbb{N})\\-\frac{1}{1+3^{\frac{k}{2}}}\cdot\frac{2k}{B_k}(\sigma_{k-1}(n)+3^{\frac{k}{2}}\sigma_{k-1}(\frac{n}{3}))\ \ \ (n\in3\mathbb{N}).\end{cases}$$
 Hence, we can get upper bounds of $s_{k,n}$ and $t_{k_1,k_2,n}$ as follows.
 $$|s_{k,n}|\le\frac{2k}{|B_k|}\sigma_{k-1}(n)+1\le\frac{2k}{|B_k|}n^k+1\le504n^k+1\le504(n+1)^k$$
 and
\begin{align}
|t_{k_1,k_2,n}|=\left|\sum_{\ell=0}^{n}s_{k_1,\ell}s_{k_2,n-\ell}\right|&\le\sum_{\ell=0}^{n}|s_{k_1,\ell}||s_{k_2,n-\ell}|\notag\\
&\le504^2\sum_{\ell=0}^{n}(\ell+1)^{k_1}(n-\ell+1)^{k_2}\notag\\
&\le504^2\sum_{\ell=0}^{n}(n+1)^{k_1+k_2}\notag\\
&=504^2(n+1)^{k_1+k_2+1}\notag.
\end{align}
Therefore, we can get an upper bound of $\Delta_{3,r_k}$ at z and $\tau$ as follows.
\begin{align}
|\Delta_{3,4}(z)|=|E_4^+(z)|&=\left|\sum_{n=0}^{\infty}s_{4,n}q^n\right|\notag\\
&\le\sum_{n=0}^{\infty}|s_{4,n}|e^{-2\pi n{\rm Im}(z)}\notag\\
&=\sum_{n=0}^{200}|s_{4,n}|e^{-2\pi n{\rm Im}(z)}+\sum_{n=201}^{\infty}|s_{4,n}|e^{-2\pi n{\rm Im}(z)}\notag\\
&\le\sum_{n=0}^{200}|s_{4,n}|e^{-2\pi n{\rm Im}(z)}+504\sum_{n=201}^{\infty}(n+1)^4e^{-2\pi n{\rm Im}(z)}\notag\\
&\le\sum_{n=0}^{200}|s_{4,n}|e^{-\frac{2\pi}{\sqrt{3}}\sin{(\frac{23}{10})}n}+504\sum_{n=201}^{\infty}(n+1)^4e^{-\frac{2\pi}{\sqrt{3}}\sin{(\frac{23}{10})}n}\notag\\
&<3.8757,\notag
\end{align}
\begin{align}
|\Delta_{3,4}(\tau)|&\le\sum_{n=0}^{200}|s_{4,n}|e^{-2\pi n{\rm Im}(\tau)}+504\sum_{n=201}^{\infty}(n+1)^4e^{-2\pi n{\rm Im}(\tau)}\notag\\
&=\sum_{n=0}^{200}|s_{4,n}|e^{-0.7\pi n}+504\sum_{n=201}^{\infty}(n+1)^4e^{-0.7\pi n}\notag\\
&<7.8622,\notag
\end{align}
\begin{align}
|\Delta_{3,6}(z)|=|E_6^+(z)|&=\left|\sum_{n=0}^{\infty}s_{6,n}q^n\right|\notag\\
&\le\sum_{n=0}^{\infty}|s_{6,n}|e^{-2\pi n{\rm Im}(z)}\notag\\
&\le\sum_{n=0}^{200}|s_{6,n}|e^{-2\pi n{\rm Im}(z)}+504\sum_{n=201}^{\infty}(n+1)^6e^{-2\pi n{\rm Im}(z)}\notag\\
&\le\sum_{n=0}^{200}|s_{6,n}|e^{-\frac{2\pi}{\sqrt{3}}\sin{(\frac{23}{10})}n}+504\sum_{n=201}^{\infty}(n+1)^6e^{-\frac{2\pi}{\sqrt{3}}\sin{(\frac{23}{10})}n}\notag\\
&<6.7891,\notag
\end{align}
\begin{align}
|\Delta_{3,6}(\tau)|&\le\sum_{n=0}^{200}|s_{6,n}|e^{-2\pi n{\rm Im}(\tau)}+504\sum_{n=201}^{\infty}(n+1)^6e^{-2\pi n{\rm Im}(\tau)}\notag\\
&=\sum_{n=0}^{200}|s_{6,n}|e^{-0.7\pi n}+504\sum_{n=201}^{\infty}(n+1)^6e^{-0.7\pi n}\notag\\
&<21.157,\notag
\end{align}
\begin{align}
|\Delta_{3,8}(z)|&=\frac{41}{1728}|E_4^+(z)^2-E_8^+(z)|\notag\\
&=\frac{41}{1728}\left|\sum_{n=0}^{\infty}(t_{4,4,n}-s_{8,n})q^n\right|\notag\\
&\le\frac{41}{1728}\left(\sum_{n=0}^{200}|t_{4,4,n}-s_{8,n}|e^{-2\pi n{\rm Im}(z)}+\sum_{n=201}^{\infty}(|t_{4,4,n}|+|s_{8,n}|)e^{-2\pi n{\rm Im}(z)}\right)\notag\\
&\le\frac{41}{1728}\left(\sum_{n=0}^{200}|t_{4,4,n}-s_{8,n}|e^{-2\pi n{\rm Im}(z)}+2\cdot504^2\sum_{n=201}^{\infty}(n+1)^9e^{-2\pi n{\rm Im}(z)}\right)\notag\\
&\le\frac{41}{1728}\left(\sum_{n=0}^{200}|t_{4,4,n}-s_{8,n}|e^{-\frac{2\pi}{\sqrt{3}}\sin{(\frac{23}{10})}n}+2\cdot504^2\sum_{n=201}^{\infty}(n+1)^9e^{-\frac{2\pi}{\sqrt{3}}\sin{(\frac{23}{10})}n}\right)\notag\\
&<0.10414,\notag
\end{align}
\begin{align}
|\Delta_{3,8}(\tau)|&\le\frac{41}{1728}\left(\sum_{n=0}^{200}|t_{4,4,n}-s_{8,n}|e^{-2\pi n{\rm Im}(\tau)}+2\cdot504^2\sum_{n=201}^{\infty}(n+1)^9e^{-2\pi n{\rm Im}(\tau)}\right)\notag\\
&\le\frac{41}{1728}\left(\sum_{n=0}^{200}|t_{4,4,n}-s_{8,n}|e^{-0.7\pi n}+2\cdot504^2\sum_{n=201}^{\infty}(n+1)^9e^{-0.7\pi n}\right)\notag\\
&<0.24233,\notag
\end{align}
\begin{align}
|\Delta_{3,10}(z)|&=\frac{61}{432}|E_4^+(z)E_6^+(z)-E_8^+(z)|\notag\\
&=\frac{61}{432}\left|\sum_{n=0}^{\infty}(t_{4,6,n}-s_{10,n})q^n\right|\notag\\
&\le\frac{61}{432}\left(\sum_{n=0}^{200}|t_{4,6,n}-s_{10,n}|e^{-2\pi n{\rm Im}(z)}+\sum_{n=201}^{\infty}(|t_{4,6,n}|+|s_{10,n}|)e^{-2\pi n{\rm Im}(z)}\right)\notag\\
&\le\frac{61}{432}\left(\sum_{n=0}^{200}|t_{4,6,n}-s_{10,n}|e^{-2\pi n{\rm Im}(z)}+2\cdot504^2\sum_{n=201}^{\infty}(n+1)^{11}e^{-2\pi n{\rm Im}(z)}\right)\notag\\
&\le\frac{61}{432}\left(\sum_{n=0}^{200}|t_{4,6,n}-s_{10,n}|e^{-\frac{2\pi}{\sqrt{3}}\sin{(\frac{23}{10})}n}+2\cdot504^2\sum_{n=201}^{\infty}(n+1)^{11}e^{-\frac{2\pi}{\sqrt{3}}\sin{(\frac{23}{10})}n}\right)\notag\\
&<0.26974,\notag
\end{align}
\begin{align}
|\Delta_{3,10}(\tau)|&\le\frac{61}{432}\left(\sum_{n=0}^{200}|t_{4,6,n}-s_{10,n}|e^{-2\pi n{\rm Im}(\tau)}+2\cdot504^2\sum_{n=201}^{\infty}(n+1)^{11}e^{-2\pi n{\rm Im}(\tau)}\right)\notag\\
&\le\frac{61}{432}\left(\sum_{n=0}^{200}|t_{4,6,n}-s_{10,n}|e^{-0.7\pi n}+2\cdot504^2\sum_{n=201}^{\infty}(n+1)^{11}e^{-0.7\pi n}\right)\notag\\
&<0.81140,\notag
\end{align}
\begin{align}
|\Delta_{3,14}(z)|&=\frac{22427}{272160}|E_6^+(z)E_8^+(z)-E_8^+(z)|\notag\\
&=\frac{22427}{272160}\left|\sum_{n=0}^{\infty}(t_{6,8,n}-s_{14,n})q^n\right|\notag\\
&\le\frac{22427}{272160}\left(\sum_{n=0}^{200}|t_{6,8,n}-s_{14,n}|e^{-2\pi n{\rm Im}(z)}+\sum_{n=201}^{\infty}(|t_{6,8,n}|+|s_{14,n}|)e^{-2\pi n{\rm Im}(z)}\right)\notag\\
&\le\frac{22427}{272160}\left(\sum_{n=0}^{200}|t_{6,8,n}-s_{14,n}|e^{-2\pi n{\rm Im}(z)}+2\cdot504^2\sum_{n=201}^{\infty}(n+1)^{15}e^{-2\pi n{\rm Im}(z)}\right)\notag\\
&\le\frac{22427}{272160}\left(\sum_{n=0}^{200}|t_{6,8,n}-s_{14,n}|e^{-\frac{2\pi}{\sqrt{3}}\sin{(\frac{23}{10})}n}+2\cdot504^2\sum_{n=201}^{\infty}(n+1)^{15}e^{-\frac{2\pi}{\sqrt{3}}\sin{(\frac{23}{10})}n}\right)\notag\\
&<0.54192,\notag
\end{align}
\begin{align}
|\Delta_{3,14}(\tau)|&\le\frac{22427}{272160}\left(\sum_{n=0}^{200}|t_{6,8,n}-s_{14,n}|e^{-2\pi n{\rm Im}(\tau)}+2\cdot504^2\sum_{n=201}^{\infty}(n+1)^{15}e^{-2\pi n{\rm Im}(\tau)}\right)\notag\\
&\le\frac{22427}{272160}\left(\sum_{n=0}^{200}|t_{6,8,n}-s_{14,n}|e^{-0.7\pi n}+2\cdot504^2\sum_{n=201}^{\infty}(n+1)^{15}e^{-0.7\pi n}\right)\notag\\
&<3.0481.\notag
\end{align}
Hence, we have
$$|\Delta_{3,r_k}(z)||\Delta_{3,14-r_k}(\tau)|<3.1448.$$
\end{proof}
\ \\
{\large(The proof of Proposition2.4(a))}
\begin{proof}
When $\ell_k\ge0$, the value of the right hand side of $(10)$ is bounded above by
\begin{align}
&e^{-2\pi (7\ell_k+18)(\frac{1}{\sqrt{3}}\sin{(\frac{23}{10})}-0.35)}\times \left(\frac{1.0258\times10^{-2}}{4.3086\times10^{-4}}\right)^{\ell_k}\times\frac{3.1448}{4.3086\times10^{-4}\times0.41095}\notag\\
&=\left(e^{-14\pi(\frac{1}{\sqrt{3}}\sin{(\frac{23}{10})}-0.35)}\frac{1.0258\times10^2}{4.3086\times10^{-4}}\right)^{\ell_k}\times e^{-36\pi(\frac{1}{\sqrt{3}}\sin{(\frac{23}{10})}-0.35)}\frac{3.1448}{4.3086\times10^{-4}\times0.41095}\notag\\
&<(0.68936)^{\ell_k}\times1.9674\notag\\
&\le1.9674.\notag
\end{align}
When $\ell_k<0$, the value of the right hand side of $(10)$ is bounded above by
\begin{align}
&e^{-2\pi (-11\ell_k+18)(\frac{1}{\sqrt{3}}\sin{(\frac{23}{10})}-0.35)}\times \left(\frac{5.0415\times10^{-2}}{2.8964\times10^{-4}}\right)^{-\ell_k}\times\frac{3.1448}{4.3086\times10^{-4}\times0.41095}\notag\\
&=\left(e^{-22\pi(\frac{1}{\sqrt{3}}\sin{(\frac{23}{10})}-0.35)}\frac{5.0415\times10^{-2}}{2.8964\times10^{-4}}\right)^{-\ell_k}\times e^{-36\pi(\frac{1}{\sqrt{3}}\sin{(\frac{23}{10})}-0.35)}\frac{3.1448}{4.3086\times10^{-4}\times0.41095}\notag\\
&<(0.66589)^{-\ell_k}\times1.9674\notag\\
&\le0.66589\times1.9674\notag\\
&<1.3101.\notag
\end{align}
We complete the ploof of Proposition 2.4(a).
\end{proof}

\section{The case of $\frac{23}{10}<\theta<\frac{5\pi}{6}-\frac{12}{25m}$}
Put $z=\frac{1}{\sqrt{3}}e^{i\theta}$ $(\frac{\pi}{2}<\theta<\frac{23}{10})$ and $A'=0.15$. We move the countor of integration given in $(4)$ downward to a height $A'$. As we do so, each pole $\tau_0$ of $G(\tau,z)$ in the region defined by
\begin{center}
$-\frac{1}{2}<{\rm Re}(\tau)<\frac{1}{2}$ and $A'<{\rm Im}(\tau)<A$
\end{center}
will contribute a term $2\pi i\cdot{\rm Res}_{\tau=\tau_0}G(\ \cdot\ ,z)$ to the equation. The pole of $G(\ \cdot\ ,z)$ occurs when $\tau=z, \frac{-1}{3z}, \frac{z}{3z+1}, \frac{-1}{3z+3}, \frac{-z-1}{3z+2}$ or $\frac{3z+1}{6z+3}$ which are equivalent to $z$ under the action of $\Gamma_0^+(3)$. Then the residue theorem yields
\begin{multline}
\int_{-\frac{1}{2}+iA'}^{\frac{1}{2}+iA'}G(\tau,z)d\tau=f_{k,m}(z)\\+2\pi i\left\{\sum_{\tau=z,\frac{-1}{3z}}{\rm Res}_\tau G(\ \cdot\ ,z)+\sum_{\tau=\frac{z}{3z+1},\frac{-1}{3z+3}}{\rm Res}_\tau G(\ \cdot\ ,z)+\sum_{\tau=\frac{-z-1}{3z+2},\frac{3z+1}{6z+3}}{\rm Res}_\tau G(\ \cdot\ ,z)\right\}.\notag
\end{multline}
By the same calculation as §4, we have
\begin{multline}
e^{-2\pi m\frac{1}{\sqrt{3}}\sin\theta}e^{\frac{ik\theta}{2}}f_{k,m}\left(\frac{1}{\sqrt{3}}e^{i\theta}\right)-2\cos\left(\frac{k\theta}{2}-2\pi m\frac{1}{\sqrt{3}}\cos\theta\right)\\
=e^{-2\pi m\frac{1}{\sqrt{3}}\sin\theta}e^{\frac{ik\theta}{2}}\int_{-\frac{1}{2}+iA'}^{\frac{1}{2}+iA'}G(\tau,z)d\tau+B_{k,m}(\theta)+C_{k,m}(\theta),
\end{multline}
where
\begin{align}
B_{k,m}(\theta):&=-e^{-2\pi m\frac{1}{\sqrt{3}}\sin{\theta}}e^{\frac{ik\theta}{2}}\cdot 2\pi i\sum_{\tau=\frac{z}{3z+1},\frac{-1}{3z+3}}{\rm Res}_\tau G(\ \cdot\ ,z)\notag\\
&=e^{-2\pi m\frac{1}{\sqrt{3}}(\sin{\theta}-\frac{\sin{\theta}}{4+2\sqrt{3}\cos{\theta}})}\{e^{i(\frac{k\theta}{2}-2\pi m\frac{\sqrt{3}+\cos{\theta}}{4\sqrt{3}+6\cos{\theta}})}(\sqrt{3}e^{i\theta}+1)^{-k}\notag\\
&\quad\quad+e^{-i(\frac{k\theta}{2}-2\pi m\frac{\sqrt{3}+\cos{\theta}}{4\sqrt{3}+6\cos{\theta}})}(\sqrt{3}e^{-i\theta}+1)^{-k}\},\notag
\end{align}
\begin{align}
C_{k,m}(\theta):&=-e^{-2\pi m\frac{1}{\sqrt{3}}\sin{\theta}}e^{\frac{ik\theta}{2}}\cdot 2\pi i\sum_{\tau=\frac{-z-1}{3z+2},\frac{3z+1}{6z+3}}{\rm Res}_\tau G(\ \cdot\ ,z)\notag\\
&=e^{-2\pi m\frac{1}{\sqrt{3}}(\sin{\theta}-\frac{\sin{\theta}}{7+4\sqrt{3}\cos{\theta}})}\{e^{i(\frac{k\theta}{2}-2\pi m\frac{3\sqrt{3}+5\cos{\theta}}{7\sqrt{3}+12\cos{\theta}})}(\sqrt{3}e^{i\theta}+2)^{-k}\notag\\
&\quad\quad+e^{-i(\frac{k\theta}{2}-2\pi m\frac{3\sqrt{3}+5\cos{\theta}}{7\sqrt{3}+12\cos{\theta}})}(\sqrt{3}e^{-i\theta}+2)^{-k}\}.\notag
\end{align}
Hence, the absolute value of the left hand side of (11) is bounded above by
\begin{align}
e^{-2\pi m\frac{1}{\sqrt{3}}\sin{\theta}}&\int_{-\frac{1}{2}+iA'}^{\frac{1}{2}+iA'}|G(\tau,z)|d|\tau|+|B_{k,m}(\theta)|+|C_{k,m}(\theta)|\notag\\ 
&\le e^{-2\pi m\frac{1}{\sqrt{3}}\sin{\theta}}\int_{-\frac{1}{2}+iA'}^{\frac{1}{2}+iA'}e^{-2\pi m{\rm Im}\tau}\frac{|\varDelta_3^+(z)|^{\ell_k}}{|\varDelta_3^+(\tau)|^{\ell_k}}\cdot\frac{|\varDelta_{3,r_k}(z)||\varDelta_{3,14-r_k}(\tau)|}{|\varDelta_3^+(\tau)||j_3^+(\tau)-j_3^+(z)|}d|\tau|\notag\\
&\hspace{20pt}+|B_{k,m}(\theta)|+|C_{k,m}(\theta)|\notag\\
&\le e^{-2\pi m(\frac{1}{2\sqrt{3}}-0,15)}\int_{-\frac{1}{2}+iA'}^{\frac{1}{2}+iA'}\frac{|\varDelta_3^+(z)|^{\ell_k}}{|\varDelta_3^+(\tau)|^{\ell_k}}\cdot\frac{|\varDelta_{3,r_k}(z)||\varDelta_{3,14-r_k}(\tau)|}{|\varDelta_3^+(\tau)||j_3^+(\tau)-j_3^+(z)|}d|\tau|\notag\\
&\hspace{20pt}+|B_{k,m}(\theta)|+|C_{k,m}(\theta)|.
\end{align}
To prove the Proposition 2.4 (b), we show the next lemma.
\begin{lem}
Let $z=\frac{1}{\sqrt{3}}e^{i \theta}$ with $\frac{23}{10}\le \theta <\frac{5\pi}{6}-\frac{12}{25m}$ and $\tau =x+0.15i$ with $-\frac{1}{2}\le x\le \frac{1}{2}$,\\
{\rm (a)} $3.4094\times10^{-4}<|\Delta_3^+(z)|<0.22521$.\\
{\rm (b)} $7.8764\times10^{-6}<|\Delta_3^+(\tau)|<61.432$.\\
{\rm (c)} $|j_3^+(\tau)-j_3^+(z)|>4.1403$.\\
{\rm (d)} For $r_k=0,4,6,8,10,14,\ |\Delta_{3,r_k}(z)||\Delta_{3,14-r_k}(\tau)|<1.8006\times10^3$.\\
{\rm (e)} $|B_{k,m}(\theta)|<0.62504$.\\
{\rm (f)} $|C_{k,m}(\theta)|<0.25843$.
\end{lem}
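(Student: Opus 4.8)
The plan is to follow the template of Lemma 4.1, now with contour height $A'=0.15$ and the shifted angular range, and to treat the two genuinely new boundary terms (e) and (f) separately. Throughout I would record that $\mathrm{Im}(z)=\tfrac{1}{\sqrt3}\sin\theta$ is decreasing on $[\tfrac{23}{10},\tfrac{5\pi}{6})$, with maximum at $\theta=\tfrac{23}{10}$ and infimum $\tfrac{1}{2\sqrt3}$ approached as $\theta\to\tfrac{5\pi}{6}$, while $\mathrm{Im}(\tau)=0.15$ is constant; every estimate below is obtained by inserting the appropriate extremal value of $\mathrm{Im}(z)$.

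For (a), (b), (d) I would reproduce the estimates of Lemma 4.1 in structure. Writing $\Delta_3^+(z)=e^{4\pi iz}\prod_{n\ge1}(1-e^{2\pi inz})^{12}(1-e^{6\pi inz})^{12}$, the upper bounds follow from Euler's pentagonal number theorem together with $|e^{2\pi iz\cdot(3n^2-n)/2}|=e^{-\pi\mathrm{Im}(z)(3n^2-n)}$, and the lower bounds from $|1-e^{2\pi inz}|\ge1-e^{-2\pi n\mathrm{Im}(z)}$; evaluating at $\mathrm{Im}(z)=\tfrac1{2\sqrt3}$ (for the upper bound on $|\Delta_3^+(z)|$), at $\mathrm{Im}(z)=\tfrac1{\sqrt3}\sin\tfrac{23}{10}$ (for the lower bound), and at $\mathrm{Im}(\tau)=0.15$ throughout, yields (a) and (b). For (d) I would reuse the coefficient bounds $|s_{k,n}|\le504(n+1)^k$ and $|t_{k_1,k_2,n}|\le504^2(n+1)^{k_1+k_2+1}$, split each $q$-series into a numerically computed head and a geometric tail, estimate every $|\Delta_{3,r_k}|$ at $z$ and at $\tau$, and take the maximum of the six products $|\Delta_{3,r_k}(z)||\Delta_{3,14-r_k}(\tau)|$.

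The computational core is (c). Since $z$ now lies near the corner $\rho_3=\tfrac1{\sqrt3}e^{5\pi i/6}$, I would compare against $\rho_3$ rather than against $i/\sqrt3$, using $|j_3^+(\tau)-j_3^+(z)|\ge|j_3^+(\tau)-j_3^+(\rho_3)|-|j_3^+(\rho_3)-j_3^+(z)|$ with $j_3^+(\rho_3)=-42$. The key algebraic identity is now a \emph{sum} of sixth powers: with $u=(\eta(\tau)/\eta(3\tau))^6$, $v=3^3(\eta(3\tau)/\eta(\tau))^6$, so that $uv=27$, one gets
\[
j_3^+(\tau)+42=\Bigl(\tfrac{\eta(\tau)}{\eta(3\tau)}\Bigr)^{12}+3^6\Bigl(\tfrac{\eta(3\tau)}{\eta(\tau)}\Bigr)^{12}+54=(u+v)^2=\prod_{k=0}^{5}\Bigl(\tfrac{\eta(\tau)}{\eta(3\tau)}-\sqrt3\,\zeta_k\,\tfrac{\eta(3\tau)}{\eta(\tau)}\Bigr)^2,
\]
where $\zeta_0,\dots,\zeta_5$ run over the sixth roots of $-1$. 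I would then bound $|j_3^+(\tau)+42|$ from below exactly as in Lemma 4.1(c): bound $\tfrac{d}{dx}$ of each factor on the segment $\tau=x+0.15i$ (using $|a_n|,|b_n|\le2^n$ and truncating the tail), sample at the points $x=n/2000$, and subtract the resulting Lipschitz error from the sampled minimum. The subtracted term $|j_3^+(\rho_3)-j_3^+(z)|$ is controlled by the monotonicity of $\theta\mapsto j_3^+(\tfrac1{\sqrt3}e^{i\theta})$, so its maximum over $[\tfrac{23}{10},\tfrac{5\pi}{6})$ occurs at $\theta=\tfrac{23}{10}$, and the difference of the two numerical values gives $4.1403$. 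I expect this step, and in particular producing a derivative bound sharp enough that the sampling resolution certifies the minimum, to be the main obstacle.

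Finally, for (e) and (f) I would argue directly from the closed forms of $B_{k,m}$ and $C_{k,m}$. Because the two summands are complex conjugates, $|B_{k,m}(\theta)|\le2\,e^{-2\pi m\phi(\theta)/\sqrt3}\,|\sqrt3e^{i\theta}+1|^{-k}$ with $\phi(\theta)=\sin\theta\,\tfrac{3+2\sqrt3\cos\theta}{4+2\sqrt3\cos\theta}\ge0$ and $|\sqrt3e^{i\theta}+1|^2=4+2\sqrt3\cos\theta\in(1,\,4+2\sqrt3\cos\tfrac{23}{10}]$, and similarly for $C_{k,m}$. The delicate point is the right endpoint, where $\phi\to0$ and $|\sqrt3e^{i\theta}+1|\to1$ simultaneously: I would use the cutoff $\theta<\tfrac{5\pi}{6}-\tfrac{12}{25m}$ to secure $\phi(\theta)\gtrsim\tfrac{6\sqrt3}{25m}$, hence a decay factor bounded by a constant near $e^{-12\pi/25}$ independent of $m$, while away from the endpoint the exponential decay is overwhelming. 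To absorb the growth of $|\sqrt3e^{i\theta}+1|^{-k}$ when $k<0$, I would invoke the standing hypothesis $m\ge18|\ell_k|+23$ of Proposition 2.4(b) together with $|k|\le12|\ell_k|+14$, which makes $2\pi m\phi(\theta)/\sqrt3$ dominate $\tfrac{|k|}{2}\log(4+2\sqrt3\cos\theta)$ uniformly in $\theta$; balancing these two terms is precisely what pins down the constants $0.62504$ and $0.25843$. This coupling of the angular cutoff with the $m$--$\ell_k$ inequality is the conceptually new ingredient relative to the estimates of §4.
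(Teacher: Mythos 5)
Your proposal is correct and follows essentially the same route as the paper: (a), (b), (d) are done by recycling Lemma 4.1 at the new height and angular range; (c) uses the identical factorization $j_3^+(\tau)+42=\prod_{k=0}^{5}\bigl(\frac{\eta(\tau)}{\eta(3\tau)}-\sqrt{3}\,\omega^{2k+1}\frac{\eta(3\tau)}{\eta(\tau)}\bigr)^2$ over sixth roots of $-1$, with the same derivative-bound-plus-sampling certification and the same monotonicity bound on $|j_3^+(\rho_3)-j_3^+(z)|$; and (e), (f) rest on the same conjugate-pair bound, the inequality $\frac{|k|}{2m}\le\frac{1}{3}$ coming from $m\ge18|\ell_k|+23$, and the cutoff $\frac{12}{25m}$. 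The paper merely makes your "balancing" step concrete by writing the bound as $2g(\theta)^m$ with $g(\theta)=e^{-\frac{2\pi}{\sqrt{3}}(\sin\theta-\frac{\sin\theta}{4+2\sqrt{3}\cos\theta})}(4+2\sqrt{3}\cos\theta)^{1/3}$, proving $g'>0$, and applying the mean value theorem near $\frac{5\pi}{6}$ to get $g(\theta)<1-\frac{1.1631}{m}$.
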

\begin{proof}
We can show (a),(b), and (d) by exactly the same way as the proof of Lemma 4.1. Hence, we only have to prove (c),(e), and (f).

(c)
$$|j_3^+(\tau)-j_3^+(z)|\ge\left|j_3^+(\tau)-j_3^+\left(\frac{1}{\sqrt{3}}e^{\frac{5\pi i}{6}}\right)\right|-\left|j_3^+\left(\frac{1}{\sqrt{3}}e^{\frac{5\pi i}{6}}\right)-j_3^+(z)\right|.$$
First, we estimate $|j_3^+(\tau)-j_3^+(\frac{1}{\sqrt{3}}e^{\frac{5\pi i}{6}})|$.\\
Put $\omega=e^{\frac{\pi i}{6}}$,
\begin{align}
j_3^+(\tau)-j_3^+\left(\frac{1}{\sqrt{3}}e^{\frac{5\pi i}{6}}\right)&=j_3^+(\tau)+42\notag\\
&=\left(\frac{\eta(\tau)}{\eta(3\tau)}\right)^{12}+3^6\left(\frac{\eta(3\tau)}{\eta(\tau)}\right)^{12}+54\notag\\
&=\left\{\left(\frac{\eta(\tau)}{\eta(3\tau)}\right)^{6}+3^3\left(\frac{\eta(3\tau)}{\eta(\tau)}\right)^{6}\right\}^2\notag\\
&=\prod_{k=0}^{5}\left(\frac{\eta(\tau)}{\eta(3\tau)}-\sqrt{3}\omega^{2k+1}\frac{\eta(3\tau)}{\eta(\tau)}\right)^2.\notag
\end{align}
Hence, we have
$$\left|j_3^+(\tau)-j_3^+\left(\frac{1}{\sqrt{3}}e^{\frac{5\pi i}{6}}\right)\right|=\prod_{k=0}^{5}\left|\frac{\eta(\tau)}{\eta(3\tau)}-\sqrt{3}\omega^{2k+1}\frac{\eta(3\tau)}{\eta(\tau)}\right|^2.$$
To calculate a lower bound for $j_3^+(\tau)-j_3^+\left(\frac{1}{\sqrt{3}}e^{\frac{\pi i}{2}}\right)$, we bound the derivative of $\frac{\eta(\tau)}{\eta(3\tau)}-\sqrt{3}\omega^{2k+1}\frac{\eta(3\tau)}{\eta(\tau)}$ with respect to $x$ by similar calculation in the proof of Lemma 4.1 (c) as follows.
\begin{align}
&\left|\frac{d}{dx}\left(\frac{\eta(\tau)}{\eta(3\tau)}-\sqrt{3}\omega^{2k+1}\frac{\eta(3\tau)}{\eta(\tau)}\right)\right|\notag\\
=&\left|\frac{d}{dx}\left(q^{-\frac{1}{12}}\sum_{n=0}^{\infty}a_nq^n-\sqrt{3}\omega^{2k+1}q^{\frac{1}{12}}\sum_{n=0}^{\infty}b_nq^n\right)\right|\notag\\
=&\left|\sum_{n=0}^{\infty}2\pi i\left(n-\frac{1}{12}\right)a_nq^{n-\frac{1}{12}}-\sqrt{3}\omega^k\sum_{n=0}^{\infty}2\pi i\left(n+\frac{1}{12}\right)b_nq^{n+\frac{1}{12}}\right|\notag\\
\le&2\pi\sum_{n=0}^{\infty}\left(\left|n-\frac{1}{12}\right||a_n|e^{-0.3\pi (n-\frac{1}{12})}+\sqrt{3}\left(n+\frac{1}{12}\right)|b_n|e^{-0.3\pi (n+\frac{1}{12})}\right)\notag\\
=&2\pi\sum_{n=0}^{100}\left(\left|n-\frac{1}{12}\right||a_n|e^{-0.3\pi (n-\frac{1}{12})}+\sqrt{3}\left(n+\frac{1}{12}\right)|b_n|e^{-0.3\pi (n+\frac{1}{12})}\right)\notag\\
&\hspace{10pt}+2\pi\sum_{n=101}^{\infty}\left(\left|n-\frac{1}{12}\right||a_n|e^{-0.3\pi (n-\frac{1}{12})}+\sqrt{3}\left(n+\frac{1}{12}\right)|b_n|e^{-0.3\pi (n+\frac{1}{12})}\right)\notag\\
\le&2\pi\sum_{n=0}^{100}\left(\left|n-\frac{1}{12}\right||a_n|e^{-0.3\pi (n-\frac{1}{12})}+\sqrt{3}\left(n+\frac{1}{12}\right)|b_n|e^{-0.3\pi (n+\frac{1}{12})}\right)\notag\\
&\hspace{10pt}+2\pi\sum_{n=101}^{\infty}\left(\left|n-\frac{1}{12}\right|2^ne^{-0.3\pi (n-\frac{1}{12})}+\sqrt{3}\left(n+\frac{1}{12}\right)2^ne^{-0.3\pi (n+\frac{1}{12})}\right)\notag\\
<&32.023.\notag
\end{align}
If we evaluate $|\frac{\eta(\tau)}{\eta(3\tau)}-\sqrt{3}\omega^{2k+1}\frac{\eta(3\tau)}{\eta(\tau)}|$ at the points $\tau_0=\frac{n}{2000}+0.15i$ for $-1000\le n\le1000$, the spacing between the points is small enough that on the entire interval. When $|x-\frac{n}{2000}|\le\frac{1}{4000}$, the difference between $|\frac{\eta(\tau)}{\eta(3\tau)}-\sqrt{3}\omega^{2k+1}\frac{\eta(3\tau)}{\eta(\tau)}|$ and its value at $\tau_0$ can not exceed $\sqrt{2}\times32.023\times\frac{1}{4000}\le1.1322\times10^{-2}<|\frac{\eta(\tau_0)}{\eta(3\tau_0)}-\sqrt{3}\omega^{2k+1}\frac{\eta(3\tau_0)}{\eta(\tau_0)}|$. Hence, we have
\begin{align} 
\left|j_3^+(\tau)-j_3^+\left(\frac{1}{\sqrt{3}}e^{\frac{5\pi i}{6}}\right)\right|&\ge\min_{-1000\le n\le1000}\prod_{k=0}^{5}\left|\left|\frac{\eta(\tau_0)}{\eta(3\tau_0)}-\sqrt{3}\omega^{2k+1}\frac{\eta(3\tau_0)}{\eta(\tau_0)}\right|-1.1322\times10^{-2}\right|^2\notag\\
&>6.1224.\notag
\end{align}
Second, we estimate $|j_3^+(\frac{1}{\sqrt{3}}e^{\frac{5\pi i}{6}})-j_3^+(z)|$.\\
Since $j_3^+(\frac{1}{\sqrt{3}}e^{\frac{\pi i}{2}})=66$, $j_3^+(\frac{1}{\sqrt{3}}e^{\frac{5\pi}{6}})=-42$, and $j_3^+(\frac{1}{\sqrt{3}}e^{i\theta})$ is real for $\theta\in[\frac{\pi}{2},\frac{5\pi}{6}]$, $j_3^+(\frac{1}{\sqrt{3}}e^{i\theta})$ must be monotonically increasing. Hence, we have
$$\left|j_3^+\left(\frac{1}{\sqrt{3}}e^{\frac{5\pi i}{6}}\right)-j_3^+(z)\right|\le\left|j_3^+\left(\frac{1}{\sqrt{3}}e^{\frac{5\pi i}{6}}\right)-j_3^+(\frac{1}{\sqrt{3}}e^{\frac{23}{10}i})\right|<1.9821$$
In conclusion,
$$|j_3^+(\tau)-j_3^+(z)|>6.1224-1.9821=4.1403.$$
\ \\

(e) and (f)\\
Recall that
\begin{align}
B_{k,m}(\theta)&=e^{-2\pi m\frac{1}{\sqrt{3}}(\sin{\theta}-\frac{\sin{\theta}}{4+2\sqrt{3}\cos{\theta}})}\{e^{i(\frac{k\theta}{2}-2\pi m\frac{\sqrt{3}+\cos{\theta}}{4\sqrt{3}+6\cos{\theta}})}(\sqrt{3}e^{i\theta}+1)^{-k}\notag\\
&\quad\quad+e^{-i(\frac{k\theta}{2}-2\pi m\frac{\sqrt{3}+\cos{\theta}}{4\sqrt{3}+6\cos{\theta}})}(\sqrt{3}e^{-i\theta}+1)^{-k}\},\notag\\
C_{k,m}(\theta)&=e^{-2\pi m\frac{1}{\sqrt{3}}(\sin{\theta}-\frac{\sin{\theta}}{7+4\sqrt{3}\cos{\theta}})}\{e^{i(\frac{k\theta}{2}-2\pi m\frac{3\sqrt{3}+5\cos{\theta}}{7\sqrt{3}+12\cos{\theta}})}(\sqrt{3}e^{i\theta}+2)^{-k}\notag\\
&\quad\quad+e^{-i(\frac{k\theta}{2}-2\pi m\frac{3\sqrt{3}+5\cos{\theta}}{7\sqrt{3}+12\cos{\theta}})}(\sqrt{3}e^{-i\theta}+2)^{-k}\}.\notag
\end{align}
Since $\frac{|k|}{2m}\le\frac{1}{3}$, we can bound above $B_{k,m}(\theta)$ and $C_{k,m}(\theta)$ by
\begin{align}
|B_{k,m}(\theta)|&\le e^{-2\pi m\frac{1}{\sqrt{3}}(\sin{\theta}-\frac{\sin{\theta}}{4+2\sqrt{3}\cos{\theta}})}\left\{|\sqrt{3}e^{i\theta}+1|^{|k|}+|\sqrt{3}e^{-i\theta}+1|^{|k|}\right\}\notag\\
&=2e^{-2\pi m\frac{1}{\sqrt{3}}(\sin{\theta}-\frac{\sin{\theta}}{4+2\sqrt{3}\cos{\theta}})}(4+2\sqrt{3}\cos{\theta})^{\frac{|k|}{2}}\notag\\
&=2\left\{e^{-2\pi\frac{1}{\sqrt{3}}(\sin{\theta}-\frac{\sin{\theta}}{4+2\sqrt{3}\cos{\theta}})}(4+2\sqrt{3}\cos{\theta})^{\frac{|k|}{2m}}\right\}^m\notag\\
&\le2\left\{e^{-2\pi\frac{1}{\sqrt{3}}(\sin{\theta}-\frac{\sin{\theta}}{4+2\sqrt{3}\cos{\theta}})}(4+2\sqrt{3}\cos{\theta})^{\frac{1}{3}}\right\}^m\notag
\end{align}
and
\begin{align}
|C_{k,m}(\theta)|&\le e^{-2\pi m\frac{1}{\sqrt{3}}(\sin{\theta}-\frac{\sin{\theta}}{7+4\sqrt{3}\cos{\theta}})}\left\{|\sqrt{3}e^{i\theta}+2|^{|k|}+|\sqrt{3}e^{-i\theta}+2|^{|k|}\right\}\notag\\
&=2e^{-2\pi m\frac{1}{\sqrt{3}}(\sin{\theta}-\frac{\sin{\theta}}{7+4\sqrt{3}\cos{\theta}})}(7+4\sqrt{3}\cos{\theta})^{\frac{|k|}{2}}\notag\\
&=2\left\{e^{-2\pi\frac{1}{\sqrt{3}}(\sin{\theta}-\frac{\sin{\theta}}{7+4\sqrt{3}\cos{\theta}})}(7+4\sqrt{3}\cos{\theta})^{\frac{|k|}{2m}}\right\}^m\notag\\
&\le2\left\{e^{-2\pi\frac{1}{\sqrt{3}}(\sin{\theta}-\frac{\sin{\theta}}{7+4\sqrt{3}\cos{\theta}})}(7+4\sqrt{3}\cos{\theta})^{\frac{1}{3}}\right\}^m.\notag
\end{align}
For $\theta\in[\frac{23}{10},\frac{5\pi}{6}]$, put
\begin{align}
g(\theta)&:=e^{-2\pi\frac{1}{\sqrt{3}}(\sin{\theta}-\frac{\sin{\theta}}{4+2\sqrt{3}\cos{\theta}})}(4+2\sqrt{3}\cos{\theta})^{\frac{1}{3}},\notag\\
h(\theta)&:=e^{-2\pi\frac{1}{\sqrt{3}}(\sin{\theta}-\frac{\sin{\theta}}{7+4\sqrt{3}\cos{\theta}})}(7+4\sqrt{3}\cos{\theta})^{\frac{1}{3}}.\notag
\end{align}
First, we show that $g'(\theta)>0$, $h'(\theta)>0$ for all $\theta\in(\frac{23}{10},\frac{5\pi}{6})$ and $g'(\theta)>2.4233$, $h'(\theta)>4.2632$ for all $\theta\in(\frac{5\pi}{6}-\frac{12}{575},\frac{5\pi}{6})$.
\begin{align}
g'(\theta)&=\frac{2}{\sqrt{3}}g(\theta)\times\left\{\frac{\sin{\theta}}{4+2\sqrt{3}\cos{\theta}}\left(\frac{2\sqrt{3}\pi \sin{\theta}}{4+2\sqrt{3}\cos{\theta}}-1\right)-\pi \cos{\theta}\cdot\frac{3+2\sqrt{3}\cos{\theta}}{4+2\sqrt{3}\cos{\theta}}\right\}\notag,\\
h'(\theta)&=\frac{2}{\sqrt{3}}h(\theta)\times\left\{\frac{2\sin{\theta}}{7+4\sqrt{3}\cos{\theta}}\left(\frac{2\sqrt{3}\pi \sin{\theta}}{7+4\sqrt{3}\cos{\theta}}-1\right)-\pi \cos{\theta}\cdot\frac{6+4\sqrt{3}\cos{\theta}}{7+4\sqrt{3}\cos{\theta}}\right\}.\notag\end{align}
When $\theta\in(\frac{23}{10},\frac{5\pi}{6})$, it is easy to show that $\frac{\sin{\theta}}{4+2\sqrt{3}\cos{\theta}}\left(\frac{2\sqrt{3}\pi \sin{\theta}}{4+2\sqrt{3}\cos{\theta}}-1\right)$, $\frac{2\sin{\theta}}{7+4\sqrt{3}\cos{\theta}}\left(\frac{2\sqrt{3}\pi \sin{\theta}}{7+4\sqrt{3}\cos{\theta}}-1\right)$ are positive monotonically increasing functions and $-\pi \cos{\theta}\cdot\frac{3+2\sqrt{3}\cos{\theta}}{4+2\sqrt{3}\cos{\theta}}$, $-\pi \cos{\theta}\cdot\frac{6+4\sqrt{3}\cos{\theta}}{7+4\sqrt{3}\cos{\theta}}$ are positive monotonically decreasing functions.\\
Hence, we get $g'(\theta)>0$, $h'(\theta)>0$. Moreover, when $\theta\in(\frac{5\pi}{6}-\frac{12}{575},\frac{5\pi}{6})$,
\begin{align}
g'(\theta)&>\frac{2}{\sqrt{3}}g\left(\frac{5\pi}{6}-\frac{12}{575}\right)\times\frac{\sin{(\frac{5\pi}{6}-\frac{12}{575}})}{4+2\sqrt{3}\cos{(\frac{5\pi}{6}-\frac{12}{575}})}\left(\frac{2\sqrt{3}\pi \sin{(\frac{5\pi}{6}-\frac{12}{575}})}{4+2\sqrt{3}\cos{(\frac{5\pi}{6}-\frac{12}{575}})}-1\right)\notag\\
&>2.4233,\notag\\
h'(\theta)&>\frac{2}{\sqrt{3}}h\left(\frac{5\pi}{6}-\frac{12}{575}\right)\times\frac{2\sin{(\frac{5\pi}{6}-\frac{12}{575}})}{7+4\sqrt{3}\cos{(\frac{5\pi}{6}-\frac{12}{575}})}\left(\frac{2\sqrt{3}\pi \sin{(\frac{5\pi}{6}-\frac{12}{575}})}{7+4\sqrt{3}\cos{(\frac{5\pi}{6}-\frac{12}{575}})}-1\right)\notag\\
&>4.2632.\notag
\end{align}
For all $\theta\in[\frac{23}{10},\frac{5\pi}{6}-\frac{12}{25m}]$, the mean value theorem implies that there exist $\theta_1, \theta_2\in(\frac{5\pi}{6}-\frac{12}{25m},\frac{5\pi}{6})$ such that
\begin{align}
g(\theta)\le g\left(\frac{5\pi}{6}-\frac{12}{25m}\right)&=g\left(\frac{5\pi}{6}\right)-g'(\theta_1)\cdot\frac{12}{25m}\notag\\
&=1-g'(\theta_1)\cdot\frac{12}{25m}\notag\\
&<1-\frac{1.1631}{m},\notag\\
h(\theta)\le h\left(\frac{5\pi}{6}-\frac{12}{25m}\right)&=h\left(\frac{5\pi}{6}\right)-h'(\theta_2)\cdot\frac{12}{25m}\notag\\
&=1-h'(\theta_2)\cdot\frac{12}{25m}\notag\\
&<1-\frac{2.0463}{m}.\notag
\end{align}
Therefore, we have 
\begin{align}
&|B_{k,m}(\theta)|\le2g(\theta)^{m}<2\left(1-\frac{1.1631}{m}\right)^m<2e^{-1.1631}<0.62504,\notag\\
&|C_{k,m}(\theta)|\le2g(\theta)^{m}<2\left(1-\frac{2.0463}{m}\right)^m<2e^{-2.0463}<0.25843.\notag
\end{align}
\end{proof}
\ \\
{\large(The proof of Proposition 2.4(b))}
\begin{proof}
When $\ell_k\ge0$, the value of the right hand side of $(12)$ is bounded above by
\begin{align}
&e^{-2\pi (18\ell_k+23)(\frac{1}{2\sqrt{3}}-0.15)}\times \left(\frac{0.22521}{7.8764\times10^{-6}}\right)^{\ell_k}\times\frac{1.8006\times10^3}{7.8764\times10^{-6}\times4.1403}+0.62504+0.25843\notag\\
&=\left(e^{-36\pi(\frac{1}{2\sqrt{3}}-0.15)}\frac{0.22521}{7.8764\times10^{-6}}\right)^{\ell_k}\times e^{-46\pi(\frac{1}{2\sqrt{3}}-0.15)}\frac{1.8006\times10^3}{7.8764\times10^{-6}\times4.1403}+0.88347\notag\\
&<(4.4145\times10^{-3})^{\ell_k}\times0.10931+0.88347\notag\\
&\le0.10931+0.88347\notag\\
&=0.99728.\notag
\end{align}
When $\ell_k<0$, the value of the right hand side of $(15)$ is bounded above by
\begin{align}
&e^{-2\pi (-18\ell_k+23)(\frac{1}{2\sqrt{3}}-0.15)}\times \left(\frac{61.432}{3.4094\times10^{-4}}\right)^{-\ell_k}\times\frac{1.8006\times10^3}{7.8764\times10^{-6}\times4.1403}+0.62504+0.25843\notag\\
&=\left(e^{-36\pi(\frac{1}{2\sqrt{3}}-0.15)}\frac{61.432}{3.4094\times10^{-4}}\right)^{-\ell_k}\times e^{-46\pi(\frac{1}{2\sqrt{3}}-0.15)}\frac{1.8006\times10^3}{7.8764\times10^{-6}\times4.1403}+0.88347\notag\\
&<(2.7819\times10^{-2})^{-\ell_k}\times0.10931+0.88347\notag\\
&\le2.7819\times10^{-2}\times0.10931+0.88347\notag\\
&<0.88652.\notag
\end{align}
We complete the ploof of Proposition 2.4(b).
\end{proof}

S. Hanamoto\\
Faculity of mathematics, kyushu university, Motooka 744, Nishi-ku Fukuoka 819-0395, Japan\\
E-mail: s-hanamoto@math.kyushu-u.ac.jp\\
\ \\
S. Kuga\\
Graduate school of mathematics, Kyushu university, Motooka 744, Nishi-ku Fukuoka 819-0395, Japan\\
E-mail: ma217058@math.kyushu-u.ac.jp
\end{document}